\newtheorem{theorem}{Theorem}
\newtheorem{lemma}{Lemma}
\newtheorem{algorithm}{Algorithm}
\newtheorem{remark}{Remark}
\newtheorem{definition}{Definition}
\newtheorem{corollary}{Corollary}
\newcommand{\rd}{\,\mathrm{d}}
\newcommand{\rtr}{\,\mathrm{tr}}
\newcommand{\bsx}{\boldsymbol{x}}
\newcommand{\bsz}{\boldsymbol{z}}
\newcommand{\bsl}{\boldsymbol{l}}
\newcommand{\bsk}{\boldsymbol{k}}
\newcommand{\bsq}{\boldsymbol{q}}
\newcommand{\bsgamma}{\boldsymbol{\gamma}}
\newcommand{\bszero}{\boldsymbol{0}}
\newcommand{\nat}{\mathbb{N}}
\newcommand{\RR}{\mathbb{R}}
\newcommand{\FF}{\mathbb{F}}
\newcommand{\Dcal}{\mathcal{D}}
\newcommand{\Ecal}{\mathcal{E}}
\newcommand{\Lcal}{\mathcal{L}}
\newcommand{\Ocal}{\mathcal{O}}
\newcommand{\Rcal}{\mathcal{R}}
\newcommand{\Wcal}{\mathcal{W}}
\newcommand{\wal}{\mathrm{wal}}
\begin{document}

\title{Good interlaced polynomial lattice rules for numerical integration in weighted Walsh spaces}

\author{Takashi Goda\thanks{Graduate School of Engineering, The University of Tokyo, 7-3-1 Hongo, Bunkyo-ku, Tokyo 113-8656 (\tt{goda@frcer.t.u-tokyo.ac.jp})}}

\date{\today}

\maketitle

\begin{abstract}
Quadrature rules using higher order digital nets and sequences are known to exploit the smoothness of a function for numerical integration and to achieve an improved rate of convergence as compared to classical digital nets and sequences for smooth functions. A construction principle of higher order digital nets and sequences based on a digit interlacing function was introduced in [J. Dick, SIAM J. Numer. Anal., 45 (2007) pp.~2141--2176], which interlaces classical digital nets or sequences whose number of components is a multiple of the dimension.

In this paper, we study the use of polynomial lattice point sets for interlaced components. We call quadrature rules using such point sets {\em interlaced polynomial lattice rules}. We consider weighted Walsh spaces containing smooth functions and derive two upper bounds on the worst-case error for interlaced polynomial lattice rules, both of which can be employed as a quality criterion for the construction of interlaced polynomial lattice rules. We investigate the component-by-component construction and the Korobov construction as a means of explicit constructions of good interlaced polynomial lattice rules that achieve the optimal rate of the worst-case error. Through this approach we are able to obtain a good dependence of the worst-case error bounds on the dimension under certain conditions on the weights, while significantly reducing the construction cost as compared to higher order polynomial lattice rules.
\end{abstract}
{\em Keywords}: Quasi-Monte Carlo; numerical integration; higher order digital nets; interlaced polynomial lattice rules
\section{Introduction}\label{sec:intro}
In this paper, we study the approximation of multivariate integrals of smooth functions defined over the $s$-dimensional unit cube $[0,1)^s$,
  \begin{align*}
     I_s(f)=\int_{[0,1)^s}f(\bsx)\rd\bsx ,
  \end{align*}
by averaging function evaluations at $N$ points $\bsx_0,\ldots,\bsx_{N-1} \in [0,1)^s$
  \begin{align*}
     \hat{I}_{N,s}(f)=\frac{1}{N}\sum_{n=0}^{N-1}f(\bsx_n) .
  \end{align*}
Whereas simple Monte Carlo methods use randomly chosen sample points and achieve the root mean square error of order $N^{-1/2}$, quasi-Monte Carlo (QMC) methods aim at improving the convergence rate of the worst-case error by employing carefully designed deterministic point sets as quadrature points that are distributed as uniformly as possible. The Koksma-Hlawka inequality states that the integration error $|\hat{I}_{N,s}(f)-I_s(f)|$ is bounded above by the star-discrepancy of a point set times the variation of an integrand in the sense of Hardy and Krause, see for example \cite[Chapter~2]{Nie92a}. Since the variation of an integrand is independent of a point set, QMC methods have a deterministic worst-case error whose order equals that of the star-discrepancy and is typically given by $N^{-1+\delta}$ for any $\delta>0$. There are two prominent families for constructing QMC point sets: integration lattices \cite{Nie92a,SJ94} and digital nets and sequences \cite{DP10,Nie92a}. Regarding explicit constructions of classical digital sequences, we refer to \cite[Chapter~8]{DP10} and \cite[Chapter~4]{Nie92a}. Polynomial lattice point sets that were first proposed in \cite{Nie92b} are one of the special constructions for classical digital nets and have been extensively studied in the literature, see for example \cite[Chapter~10]{DP10} and \cite{P12}. We refer to QMC rules using a polynomial lattice point set as polynomial lattice rules.

The drawback of classical digital nets and sequences is that they cannot exploit the smoothness (in the classical, non-digital sense) of a function for numerical integration beyond order 1, and thus, it is not possible for them to achieve a higher convergence rate for smooth integrands. Higher order digital nets and sequences were introduced in \cite{Dic07a,Dic08,Dic09}, and they exploit the smoothness of an integrand and achieve the improved rate of convergence of order $N^{-\alpha+\delta}$ for any $\delta>0$, where $\alpha$ is an integer greater than 1 which measures the smoothness of a function. Recent applications in the area of uncertainty quantification, in particular partial differential equations with random coefficients, are in need of using these types of quadrature rules, see for example \cite{KSS12}.

Two construction principles of higher order digital nets and sequences have been proposed so far. One is known as higher order polynomial lattice rules that are given by generalizing the definition of polynomial lattice rules, see for instance \cite{BDGP11,BDLNP12,DP07}. The other is based on a digit interlacing function applied to classical digital nets and sequences whose number of components is a multiple $ds$ of the dimension, see for instance \cite{Dic07a,Dic08}.

In this study, we focus on the latter construction principle, wherein we use polynomial lattice point sets for interlaced components. We call such point sets {\em interlaced polynomial lattice point sets} and quadrature rules using interlaced polynomial lattice point sets {\em interlaced polynomial lattice rules}. In this context, randomization of interlaced polynomial lattice point sets using digital shift and scrambling has been studied very recently in \cite{Godxx} and \cite{GDxx} respectively. What we are concerned with in this study is to construct {\em deterministic} interlaced polynomial lattice rules that achieve the optimal convergence rate of the worst-case error for numerical integration of smooth functions.

In order to make interlaced polynomial lattice rules available, we need to have a computable quality criterion that enables us to obtain explicit constructions of good polynomial lattice point sets used for interlaced components. In this study, we consider weighted Walsh spaces of smoothness $\alpha$ with general weights and derive two upper bounds on the worst-case error for interlaced polynomial lattice rules. Here we note that the weights model the dependence of an integrand on certain projections as discussed in \cite{SW98}. One bound applies to the cases $d\le \alpha$ and $d> \alpha$, while the other tighter bound applies only to the case $d\le \alpha$. Employing either of these bounds as a quality criterion, we show that the component-by-component (CBC) construction and the Korobov construction can be used to obtain explicit constructions of good polynomial lattice point sets as basis for interlacing construction. Interlaced polynomial lattice rules thus constructed achieve the worst-case error of order $N^{-\min(\alpha,d)+\delta}$. When $d\ge \alpha$, this convergence rate is best possible \cite{Sha63} (apart from the power of the hidden $\log N$ factor). The resulting advantage of interlaced polynomial lattice rules over higher order polynomial lattice rules lies in the significantly reduced construction cost, which is an important aspect from a practical viewpoint. Another resulting advantage of interlaced polynomial lattice rules over the use of digital $(t,m,ds)$-nets or digital $(t,ds)$-sequences for interlaced components as in \cite{Dic07a,Dic08} are the better dependence of the worst-case error on the dimension and the possibility to construct the rules for a given set of weights in the Walsh space (as needed for instance in \cite{KSS12}).

The remainder of this paper is organized as follows. In Section~\ref{sec:pre}, we describe the necessary background and notation. In Section~\ref{sec:error}, we introduce the weighted Walsh space of smoothness $\alpha$ as in \cite{Dic08} and derive two upper bounds on the worst-case error for interlaced polynomial lattice rules. In Sections~\ref{sec:cbc} and~\ref{sec:korobov}, we investigate the CBC construction and Korobov construction respectively, and discuss the dependence of the worst-case error on the dimension for each construction.

\section{Preliminaries}\label{sec:pre}

We use the following notation. Let $\nat$ be the set of positive integers and $\nat_0:=\nat \cup \{0\}$. Given a prime $b$, let $\FF_b:=\{0,1,\ldots,b-1\}$ be the finite field consisting of $b$ elements. We identify the elements of $\FF_b$ with the set of integers $\{0,1,\ldots,b-1\}$. For $a,c\in \nat$ such that $a\le c$ we denote by $\{a:c\}$ the index set $\{a,a+1,\ldots,c-1,c\}$.

\subsection{Polynomial lattice rules}
Given a prime $b$, let us denote by $\FF_b((x^{-1}))$ the field of formal Laurent series over $\FF_b$. Every element of $\FF_b((x^{-1}))$ has the form
  \begin{align*}
    L = \sum_{l=w}^{\infty}t_l x^{-l} ,
  \end{align*}
where $w$ is an arbitrary integer and all $t_l\in \FF_b$. Furthermore, we denote by $\FF_b[x]$ the set of all polynomials over $\FF_b$. For a given integer $m$, we define the mapping $v_m$ from $\FF_b((x^{-1}))$ to the interval $[0,1)$ by
  \begin{align*}
    v_m\left( \sum_{l=w}^{\infty}t_l x^{-l}\right) =\sum_{l=\max(1,w)}^{m}t_l b^{-l}.
  \end{align*}
A non-negative integer $k$ whose $b$-adic expansion is given by $k=\kappa_0+\kappa_1 b+\cdots +\kappa_{a-1} b^{a-1}$ will be identified with the polynomial $k(x)=\kappa_0+\kappa_1 x+\cdots +\kappa_{a-1} x^{a-1}\in \FF_b[x]$.  For $\bsk=(k_1,\ldots, k_s)\in (\FF_b[x])^s$ and $\bsq=(q_1,\ldots, q_s)\in (\FF_b[x])^s$, we define the inner product as
  \begin{align}\label{eq:inner_product}
     \bsk \cdot \bsq := \sum_{j=1}^{s}k_j q_j \in \FF_b[x] ,
  \end{align}
and we write $q\equiv 0 \pmod p$ if $p$ divides $q$ in $\FF_b[x]$. Using this notation, a polynomial lattice point set is constructed as follows.

\begin{definition}\label{def:polynomial_lattice}(Polynomial lattice rules)
Let $m, s \in \nat$. Let $p \in \FF_b[x]$ such that $\deg(p)=m$ and let $\bsq=(q_1,\ldots,q_s) \in (\FF_b[x])^s$. A polynomial lattice point set $P_{b^m,s}(\bsq,p)$ is a point set consisting of $b^m$ points $\bsx_0,\ldots,\bsx_{b^m-1}$ that are defined as
  \begin{align*}
    \bsx_n &:= \left( v_m\left( \frac{n(x)q_1(x)}{p(x)} \right) , \ldots , v_m\left( \frac{n(x)q_s(x)}{p(x)} \right) \right) \in [0,1)^s ,
  \end{align*}
for $0\le n<b^m$. A QMC rule using this point set is called a \emph{polynomial lattice rule} with generating vector $\bsq$ and modulus $p$.
\end{definition}

In the remainder of this paper, we denote by $P_{b^m,s}(\bsq,p)$ a polynomial lattice point set, implicitly meaning that $\deg(p)=m$ and the number of components in the vector $\bsq$ is $s$.

We add one more notation and introduce the concept of the so-called {\it dual polynomial lattice} of a polynomial lattice point set. For $k\in \nat_0$ with $b$-adic expansion $k= \kappa_0 + \kappa_1 b+\cdots + \kappa_{a-1} b^{a-1}$, let $\rtr_m(k)$ be the polynomial of degree less than $m$ obtained by truncating the associated polynomial $k(x)\in \FF_b[x]$ as
  \begin{align*}
    \rtr_m(k)= \kappa_0 + \kappa_1 x+\cdots + \kappa_{m-1}x^{m-1},
  \end{align*}
where we set $\kappa_{a} = \cdots = \kappa_{m-1} = 0$ if $a< m$. For a vector $\bsk=(k_1,\ldots, k_s)\in \nat_0^s$, we define $\rtr_m(\bsk)=(\rtr_m(k_1),\ldots, \rtr_m(k_s))$. Then the dual polynomial lattice is defined as follows.

\begin{definition}\label{def:dual_net}
Let $P_{b^m,s}(\bsq,p)$ be a polynomial lattice point set. Then the dual polynomial lattice of $P_{b^m,s}(\bsq,p)$ is defined as
  \begin{align*}
     D^\perp_{\bsq,p} := \{ \bsk=(k_1,\ldots,k_s)\in \nat_0^{s}:\ \rtr_m(\bsk) \cdot \bsq\equiv 0 \pmod p \} .
  \end{align*}
where the inner product is defined in the sense of (\ref{eq:inner_product}).
\end{definition}

\subsection{Higher order digital nets}\label{ssec:ho_digital_nets}

Higher order digital nets exploit the smoothness of an integrand so that they achieve the optimal order of convergence of the deterministic worst-case error for functions with smoothness $\alpha>1$. The result is based on a bound on the decay of the Walsh coefficients of smooth functions, see \cite{Dic08}. We refer to \cite{Dic09} for a brief introduction of the central ideas.

As mentioned in the introduction, there exist two construction principles of higher order digital nets and sequences so far. One is called higher order polynomial lattice rules that are defined as follows. In Definition~\ref{def:polynomial_lattice}, we set $p$ with $\deg(p)=m'>m$ and replace $v_m$ with $v_{m'}$ for the mapping function. Then a higher order polynomial lattice point set consists of the first $b^m$ points of a classical polynomial lattice point set with $b^{m'}$ points (where $m'$ is recommended to equal $\alpha m$ for integrands of smoothness $\alpha$). To summarize, a higher order polynomial lattice point set is constructed as follows.

\begin{definition}\label{def:ho_polynomial_lattice}(Higher order polynomial lattice rules)
Let $m,m', s \in \nat$ be such that $m'> m$. Let $p \in \FF_b[x]$ such that $\deg(p)=m'$ and let $\bsq=(q_1,\ldots,q_s) \in (\FF_b[x])^s$. A higher order polynomial lattice point set is a point set consisting of $b^m$ points $\bsx_0,\ldots,\bsx_{b^m-1}$ that are defined as
  \begin{align*}
    \bsx_n &:= \left( v_{m'}\left( \frac{n(x)q_1(x)}{p(x)} \right) , \ldots , v_{m'}\left( \frac{n(x)q_s(x)}{p(x)} \right) \right) \in [0,1)^s ,
  \end{align*}
for $0\le n<b^m$. A QMC rule using this point set is called a \emph{higher order polynomial lattice rule} with generating vector $\bsq$ and modulus $p$.
\end{definition}

The existence of higher order polynomial lattice rules achieving the optimal order of convergence was established in \cite{DP07} and the CBC construction was proved to achieve the optimal order of convergence in \cite{BDGP11}. Using an efficient calculation of the worst-case error in the weighted Walsh space with product weights as implemented in \cite{BDLNP12}, the computational cost of $\Ocal(sb^{m'}\log b^{m'})$ operations using $\Ocal(b^{m'})$ memory is required for the CBC construction to find a good set of polynomials $\bsq=(q_1,\ldots,q_s)$. Thus the computational cost depends exponentially on $\alpha$ when $m' = \alpha m$.

The other construction principle of higher order digital nets is based on a digit interlacing function applied to classical digital nets and sequences whose number of components is $ds$, where $d$ is an integer greater than 1, which is called the interlacing factor. In the digit interlacing approach, we first construct a point set consisting of $b^m$ points in $[0,1)^{ds}$ instead of $[0,1)^{s}$. Here we denote each point in $[0,1)^{ds}$ by $\bsz_n=(z_{n,1},\ldots, z_{n,ds})$ for $0\le n<b^m$. Then every consecutive $d$ components of a point $\bsz_n$ are digitally interlaced according to the following digit interlacing function of order $d$ for real numbers
  \begin{align*}
     x_{n,j} = \Dcal_d(z_{n,(j-1)d+1},\ldots,z_{n,jd}) := \sum_{a=1}^{\infty}\sum_{r=1}^{d}z_{(j-1)d+r,n,a}b^{-r-(a-1)d} ,
  \end{align*}
for $1\le j\le s$ and $0\le n<b^m$, where we denote the $b$-adic expansion of $z_{n,j}$ by $z_{n,j}=z_{1,n,j}b^{-1}+z_{2,n,j}b^{-2}+\cdots$ for $1\le j\le ds$, and where we assume that the expansion of $z_{n,j}$ is unique in the sense that infinitely many digits are different from $b-1$. In this way, the $n$-th point $\bsx_n=(x_{n,1},\ldots,x_{n,s})\in [0,1)^{s}$ is obtained. Thus, we have
  \begin{align*}
     \bsx_n = (\Dcal_d(z_{n,1},\ldots,z_{n,d}),\Dcal_d(z_{n,d+1},\ldots,z_{n,2d}),\ldots,\Dcal_d(z_{n,(s-1)d+1},\ldots,z_{n,sd})) .
  \end{align*}
In the following, we simply write $\bsx_n=\Dcal_d(\bsz_n)$ when $\bsx_n$ is obtained by digitally interlacing every $d$ components of $\bsz_n$.

In this paper, we are concerned with the use of polynomial lattice point sets to generate a point set in $[0,1)^{ds}$ that are used as interlaced components for higher order digital nets. We call QMC rules using such point sets {\em interlaced polynomial lattice rules (of order $d$)}. For clarity, we give the definition of interlaced polynomial lattice rules below.

\begin{definition}\label{def:interlacing_polynomial_lattice}(Interlaced polynomial lattice rules)
Let $m,s,d \in \nat$, $d>1$. Let $p \in \FF_b[x]$ such that $\deg(p)=m$ and let $\bsq=(q_1,\ldots,q_{ds}) \in (\FF_b[x])^{ds}$. An interlaced polynomial lattice point set of order $d$ is a point set consisting of $b^m$ points $\bsx_0,\ldots,\bsx_{b^m-1}$ that are defined as
  \begin{align*}
    \bsx_n := \Dcal_d(\bsz_n) ,
  \end{align*}
where the point $\bsz_n$ is the $n$-th point of a polynomial lattice point set $P_{b^m,ds}(\bsq,p)$ which is given as
  \begin{align*}
    \bsz_n := \left( v_m\left( \frac{n(x) \, q_1(x)}{p(x)} \right) , \ldots , v_m\left( \frac{n(x) \, q_{ds}(x)}{p(x)} \right) \right)\in [0,1)^{ds} ,
  \end{align*}
for $0 \le n < b^m$. A QMC rule using this point set is called an \emph{interlaced polynomial lattice rule (of order $d$)} with generating vector $\bsq$ and modulus $p$.
\end{definition}

By Definition \ref{def:interlacing_polynomial_lattice}, in order to construct good interlaced polynomial lattice rules, one needs to find suitable generating vectors $\bsq=(q_1,\ldots,q_{ds})$. In the following, we discuss computer search algorithms which find good generating vectors.

In the remainder of this paper, we simply write $\Dcal_d(P_{b^m,ds}(\bsq,p))$ for an interlaced polynomial lattice point set of order $d$, implicitly meaning that $\deg(p)=m$ and the number of components in the vector $\bsq$ is $ds$. In order to introduce the dual polynomial lattice of $\Dcal_d(P_{b^m,ds}(\bsq,p))$, we need one more notation.

For an interlacing factor $d>1$ and $k_1,\ldots,k_d\in \nat_0^d$, we denote the $b$-adic expansion of $k_j$ by $k_j=\kappa_{j,0}+\kappa_{j,1}b+\cdots$ for $1\le j\le d$. Then the digit interlacing function of order $d$ for non-negative integers is defined as
  \begin{align}\label{eq:integer_interlace}
    \Ecal_d(k_1,\ldots, k_d) := \sum_{a=0}^{\infty}\sum_{r=1}^{d}\kappa_{r,a}b^{ad+r-1} .
  \end{align}
We extend this function to vectors consisting of $ds$ components as
  \begin{align*}
    \Ecal_d(k_1,\ldots, k_{ds}) = (\Ecal_d(k_1,\ldots, k_d),\Ecal_d(k_{d+1},\ldots, k_{2d}),\ldots, \Ecal_d(k_{(s-1)d+1},\ldots, k_{sd})).
  \end{align*}
Then the dual polynomial lattice of $\Dcal_d(P_{b^m,ds}(\bsq,p))$ is defined as follows.

\begin{definition}\label{def:dual_net2}
Let $\Dcal_d(P_{b^m,ds}(\bsq,p))$ be an interlaced polynomial lattice point set of order $d$. Then the dual polynomial lattice of $\Dcal_d(P_{b^m,ds}(\bsq,p))$ is defined as
  \begin{align*}
     D^\perp_{\bsq,p,d} := \{ \Ecal_d(\bsk):\ \bsk=(k_1,\ldots, k_{ds})\in D^\perp_{\bsq,p}  \}
  \end{align*}
where $D^\perp_{\bsq,p}$ is the dual polynomial lattice of $P_{b^m,ds}(\bsq,p)$ as given in Definition~\ref{def:dual_net}, in which $s$ is replaced with $ds$.
\end{definition}

\section{Numerical integration in weighted Walsh spaces}\label{sec:error}
In this section, we first recall Walsh functions and then introduce the weighted Walsh space of smoothness $\alpha$ as in \cite{Dic08}, which contains all functions whose partial mixed derivatives up to $\alpha$ in each variable are square integrable, where $\alpha$ is an integer greater than 1. Next, we derive two upper bounds on the worst-case error for interlaced polynomial lattice rules of order $d$. The first bound covers both the cases $d\le \alpha$ and $d> \alpha$, while the second tighter bound applies only to the case $d\le \alpha$.

\subsection{Walsh functions}
In order to introduce the weighted Walsh space of smoothness $\alpha$, we need to recall the definition of Walsh functions. Walsh functions were first introduced in \cite{Wal23} for the case of base 2 and were generalized later, see for example \cite{Che55}. We refer to \cite[Appendix~A]{DP10} for more information on Walsh functions in the context of numerical integration. We first give the definition for the one-dimensional case. Recall that every $x\in [0,1)$ has a $b$-adic expansion $x=x_1 b^{-1} + x_2 b^{-2} + \cdots$ with $x_1,x_2,\ldots \in \{0,1,\dots,b-1\}$ and that the coefficients $x_1,x_2,\ldots$ are uniquely determined if we require that infinitely many of them are different from $b-1$. In this paper, we will always assume this unique expansion.

\begin{definition}
Let $b$ be a prime and $\omega_b=e^{2\pi \mathrm{i}/b}$. We denote the $b$-adic expansion of $k\in \nat_0$ by $k = \kappa_0+\kappa_1 b+\cdots +\kappa_{a-1}b^{a-1}$ with $\kappa_i\in \FF_b$. Then the $k$-th $b$-adic Walsh function ${}_b\wal_k: [0,1)\to \{1,\omega_b,\ldots, \omega_b^{b-1}\}$ is defined as
  \begin{align*}
    {}_b\wal_k(x) := \omega_b^{x_1\kappa_0+\cdots+x_a\kappa_{a-1}} ,
  \end{align*}
for $x\in [0,1)$ with its unique $b$-adic expansion $x=x_1 b^{-1} + x_2 b^{-2} + \cdots$.
\end{definition}

This definition can be generalized to the higher-dimensional case.

\begin{definition}
For a dimension $s\in \nat$, let $\bsx=(x_1,\ldots, x_s)\in [0,1)^s$ and $\bsk=(k_1,\ldots, k_s)\in \nat_0^s$. Then the $\bsk$-th $b$-adic Walsh function ${}_b\wal_{\bsk}: [0,1)^s \to \{1,\omega_b,\ldots, \omega_b^{b-1}\}$ is defined as
  \begin{align*}
    {}_b\wal_{\bsk}(\bsx) := \prod_{j=1}^s {}_b\wal_{k_j}(x_j) .
  \end{align*}
\end{definition}

We note that the system $\{{}_b\wal_{\bsk}:\; \bsk\in \nat_0^s\}$ is a complete orthonormal system in $\Lcal_2([0,1)^s)$, see \cite[Theorem~A.11]{DP10}. Since we shall always use Walsh functions in a fixed prime base $b$, we omit the subscript and simply write $\wal_k$ or $\wal_{\bsk}$ in the remainder of this paper. Here we introduce the important connection between an interlaced polynomial lattice point set and Walsh functions.

\begin{lemma}\label{lemma:dual_net_Walsh}
Let $\Dcal_d(P_{b^m,ds}(\bsq,p))=\{\bsx_0,\ldots, \bsx_{b^m-1}\}$ be an interlaced polynomial lattice point set of order $d$ and $D^\perp_{\bsq,p,d}$ be its dual polynomial lattice as defined in Definition~\ref{def:dual_net2}. Then we have
  \begin{align*}
     \frac{1}{b^m}\sum_{n=0}^{b^m-1}\wal_{\bsl}(\bsx_n) = \left\{ \begin{array}{ll}
     1 & \mathrm{if}\ \bsl\in D^\perp_{\bsq,p,d} , \\
     0 & \mathrm{otherwise} . \\
     \end{array} \right.
  \end{align*}
\end{lemma}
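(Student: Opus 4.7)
The plan is to reduce the claim to the well-known character sum identity for ordinary polynomial lattice point sets by exploiting the compatibility between the real-valued interlacing $\Dcal_d$ and its integer counterpart $\Ecal_d$. Since every non-negative integer has a unique $b$-adic expansion, $\Ecal_d:\nat_0^{ds}\to \nat_0^s$ is a bijection, so every $\bsl\in \nat_0^s$ can be written uniquely as $\bsl=\Ecal_d(\bsk)$ for some $\bsk\in \nat_0^{ds}$.

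The heart of the argument is the identity
\begin{align*}
\wal_{\Ecal_d(\bsk)}(\Dcal_d(\bsz))=\wal_{\bsk}(\bsz)\qquad \text{for all }\bsk\in \nat_0^{ds},\ \bsz\in [0,1)^{ds}.
\end{align*}
To prove this I would expand everything digit by digit. Writing $k_{j'}=\sum_{a\ge 0}\kappa_{j',a}b^a$ and $z_{j'}=\sum_{a\ge 1}z_{a,j'}b^{-a}$ for $1\le j'\le ds$, the definition of $\wal_{\bsk}$ gives
\begin{align*}
\wal_{\bsk}(\bsz)=\omega_b^{\sum_{j'=1}^{ds}\sum_{a\ge 0}\kappa_{j',a}\,z_{a+1,j'}}.
\end{align*}
On the other hand, setting $j'=(j-1)d+r$ and consulting Definition~\ref{def:interlacing_polynomial_lattice} and equation~(\ref{eq:integer_interlace}), one checks that the $(ad+r-1)$-st $b$-adic digit of the $j$-th component of $\Ecal_d(\bsk)$ equals $\kappa_{(j-1)d+r,a}$, while the $(ad+r)$-th $b$-adic digit of the $j$-th component of $\Dcal_d(\bsz)$ equals $z_{a+1,(j-1)d+r}$. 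Reindexing the resulting double sum using $j'=(j-1)d+r$ with $1\le j\le s$ and $1\le r\le d$ matches the two expressions.

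Granted this identity, the lemma follows immediately. For any $\bsl\in \nat_0^s$ write $\bsl=\Ecal_d(\bsk)$; then
\begin{align*}
\frac{1}{b^m}\sum_{n=0}^{b^m-1}\wal_{\bsl}(\bsx_n)=\frac{1}{b^m}\sum_{n=0}^{b^m-1}\wal_{\bsk}(\bsz_n),
\end{align*}
and the standard character sum identity for polynomial lattice point sets (see e.g.\ \cite[Lemma~10.6]{DP10}) says that this equals $1$ if $\bsk\in D^\perp_{\bsq,p}$ (the dual of $P_{b^m,ds}(\bsq,p)$ in the sense of Definition~\ref{def:dual_net} with $s$ replaced by $ds$), and $0$ otherwise. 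By Definition~\ref{def:dual_net2}, $\bsk\in D^\perp_{\bsq,p}$ is exactly the condition $\bsl=\Ecal_d(\bsk)\in D^\perp_{\bsq,p,d}$, which gives the claim.

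The main obstacle is the bookkeeping in the digit-level identity: one must be careful about whether digit indices start at $0$ or $1$, and about aligning the stride $d$ in the mantissa of $\Dcal_d(\bsz)$ with the stride $d$ in the integer expansion produced by $\Ecal_d$. Once the correspondence between the pair $(a,r)$ on both sides is pinned down, the rest of the proof is a clean reduction to the known polynomial lattice rule identity.
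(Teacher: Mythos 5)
Your proof is correct and follows essentially the same route as the paper: reduce to the key identity $\wal_{\Ecal_d(\bsk)}(\Dcal_d(\bsz))=\wal_{\bsk}(\bsz)$ via the bijectivity of $\Ecal_d$, and then invoke the standard character sum identity for the polynomial lattice point set $P_{b^m,ds}(\bsq,p)$ together with Definition~\ref{def:dual_net2}. The only difference is that you spell out the digit-level bookkeeping behind that identity, which the paper simply attributes to ``the definitions of $\Dcal_d$, $\Ecal_d$ and the Walsh functions,'' and your index correspondence is the right one.
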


\begin{proof}
From the definition of $\Ecal_d$, for any $\bsl\in \nat_0^s$ we have a unique $\bsk\in \nat_0^{ds}$ such that $\bsl=\Ecal_d(\bsk)$. Let us recall that $\bsx_n=\Dcal_d(\bsz_n)$ for $0\le n<b^m$. Then we have
  \begin{align*}
     \frac{1}{b^m}\sum_{n=0}^{b^m-1}\wal_{\bsl}(\bsx_n) & = \frac{1}{b^m}\sum_{n=0}^{b^m-1}\wal_{\Ecal_d(\bsk)}(\Dcal_d(\bsz_n)) \\
     & = \frac{1}{b^m}\sum_{n=0}^{b^m-1}\wal_{\bsk}(\bsz_n) ,
  \end{align*}
where the second equality stems from the definitions of $\Dcal_d$, $\Ecal_d$ and the Walsh functions. Moreover, since $\{\bsz_0,\ldots, \bsz_{b^m-1}\}$ is a polynomial lattice point set, the last expression equals 1 if $\bsk\in D^\perp_{\bsq,p}$ and 0 otherwise, which is straightforward by combining Definition~\ref{def:dual_net}, \cite[Lemma~10.6]{DP10} and \cite[Lemma~4.75]{DP10}. Considering Definition~\ref{def:dual_net2}, the result follows.
\end{proof}

\subsection{Weighted Walsh space of smoothness $\alpha$}
Let us consider the Walsh series of an integrand $f\in \Lcal_2([0,1)^s)$. Because of the orthonormal property of Walsh functions, we have
  \begin{align*}
     f(\bsx)\sim \sum_{\bsl\in \nat_0^{s}}\hat{f}(\bsl)\wal_{\bsl}(\bsx) ,
  \end{align*}
where $\hat{f}(\bsl)$ is the $\bsl$-th Walsh coefficient given by
  \begin{align*}
     \hat{f}(\bsl)=\int_{[0,1)^s}f(\bsx)\overline{\wal_{\bsl}(\bsx)}\rd\bsx .
  \end{align*}
We refer to \cite[Appendix~A.3]{DP10} for a discussion on the pointwise absolute convergence of the Walsh series. In fact, for any function $f:[0,1)^s\to \RR$ in the weighted Walsh space of smoothness $\alpha$ which we shall consider in this paper, its Walsh series converges to $f$ pointwise absolutely.

For functions with square integrable partial mixed derivatives up to $\alpha$ in each variable, upper bounds on the Walsh coefficients were previously obtained in \cite{Dic08}. This result motivates the introduction of the weighted Walsh space of smoothness $\alpha$, which we denote by $\Wcal_{s,\alpha,\bsgamma}$. As shown in \cite{Dic08}, the space $\Wcal_{s,\alpha,\bsgamma}$ contains certain weighted Sobolev spaces of smoothness of order $\alpha$.

We denote the $b$-adic expansion of $k\in \nat$ by $k=\kappa_1 b^{a_1-1}+\cdots+\kappa_\nu b^{a_\nu-1}$ such that $\nu\ge 1$, $1\le a_\nu < \cdots < a_1$ and $\kappa_1,\ldots,\kappa_\nu \in \{1,\ldots,b-1\}$. Given an integer $\alpha\ge 1$, we define
  \begin{align*}
     r_\alpha (k) := b^{-\mu_\alpha(k)} ,
  \end{align*}
where $\mu_\alpha(k)$ is given by
  \begin{align}\label{eq:dick_weight}
     \mu_\alpha(k) := a_1+\cdots +a_{\min(\nu,\alpha)}.
  \end{align}
Furthermore, we define $\mu_\alpha(\bsk_v):=\sum_{j\in v} \mu_\alpha(k_j)$ and $r_\alpha(\bsk_v):=\prod_{j\in v} r_\alpha(k_j)$ for $v\subseteq \{1:s\}$ and $\bsk_v\in \nat^{|v|}$. We put $\mu_\alpha(0)=0$ and thus $r_\alpha (0)=1$. Then the weighted Walsh space $\Wcal_{s,\alpha,\bsgamma}$ with general weights $\bsgamma=(\gamma_v)_{v\subseteq \{1:s\}}$ is defined as follows.

\begin{definition}\label{def:walsh_space}
For smoothness $\alpha>1$ and a set of weights $\bsgamma=(\gamma_v)_{v\subseteq \{1:s\}}$, the weighted Walsh space $\Wcal_{s,\alpha,\bsgamma}$ is a function space consisting of functions $f:[0,1)^s\to \RR$ for which the norm
  \begin{align*}
     \| f\|_{\Wcal_{s,\alpha,\bsgamma}}:=\max_{\substack{v\subseteq \{1:s\} \\ \gamma_v\ne 0}}\gamma_v^{-1}\sup_{\bsl_v\in \nat^{|v|}}\frac{|\hat{f}(\bsl_v,\bszero)|}{r_\alpha(\bsl_v)} 
  \end{align*}
is finite, where $(\bsl_v,\bszero)$ is the vector from $\nat_0^s$ with all the components whose indices are not in $v$ equal $0$, and $|v|$ denotes the cardinality of $v$.
\end{definition}

In this study, we are interested in the worst-case error for numerical integration in $\Wcal_{s,\alpha,\bsgamma}$ using an interlaced polynomial lattice point set $\Dcal_d(P_{b^m,ds}(\bsq,p))$. The initial error for numerical integration in $\Wcal_{s,\alpha,\bsgamma}$ is defined by
  \begin{align*}
     e(P_{0,s},\Wcal_{s,\alpha,\bsgamma}) := \sup_{\substack{f\in \Wcal_{s,\alpha,\bsgamma}\\ \| f\|_{\Wcal_{s,\alpha,\bsgamma}}\le 1}}|I_s(f)| ,
  \end{align*}
where $P_{0,s}$ denotes the empty point set. The worst-case error for numerical integration in $\Wcal_{s,\alpha,\bsgamma}$ using a point set $P_{N,s}\subset [0,1)^s$ is defined by
  \begin{align*}
     e(P_{N,s},\Wcal_{s,\alpha,\bsgamma}) := \sup_{\substack{f\in \Wcal_{s,\alpha,\bsgamma}\\ \| f\|_{\Wcal_{s,\alpha,\bsgamma}}\le 1}}|\hat{I}_{N,s}(f)-I_s(f)| .
  \end{align*}
Applying Definition~\ref{def:dual_net2} and Lemma~\ref{lemma:dual_net_Walsh} to \cite[Theorem~5.1]{Dic08}, we have the following.

\begin{theorem}\label{theorem:worst-case_error}
Let $\Dcal_d(P_{b^m,ds}(\bsq,p))$ be an interlaced polynomial lattice point set of order $d$ and $D^\perp_{\bsq,p,d}$ be its dual polynomial lattice as defined in Definition \ref{def:dual_net2}. Then the initial error for numerical integration in $\Wcal_{s,\alpha,\bsgamma}$ is given by
  \begin{align*}
     e(P_{0,s},\Wcal_{s,\alpha,\bsgamma}) = \gamma_{\emptyset} .
  \end{align*}
The worst-case error for numerical integration in $\Wcal_{s,\alpha,\bsgamma}$ using $\Dcal_d(P_{b^m,ds}(\bsq,p))$ is given by
  \begin{align*}
     e(\Dcal_d(P_{b^m,ds}(\bsq,p)),\Wcal_{s,\alpha,\bsgamma}) = \sum_{\emptyset \ne v\subseteq \{1:s\}}\gamma_{v}\sum_{\substack{\bsl_v\in \nat^{|v|}\\ (\bsl_v,\bszero)\in D^\perp_{\bsq,p,d}}}r_\alpha(\bsl_v).
  \end{align*}
\end{theorem}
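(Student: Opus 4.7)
The plan is to substitute the Walsh series of $f$ into the integration error and invoke Lemma~\ref{lemma:dual_net_Walsh} to read off which coefficients survive. Any $f\in\Wcal_{s,\alpha,\bsgamma}$ has a pointwise absolutely convergent Walsh expansion, so term-by-term substitution gives
\begin{align*}
\hat{I}_{b^m,s}(f)-I_s(f) = \sum_{\bsl\in\nat_0^s}\hat{f}(\bsl)\left(\frac{1}{b^m}\sum_{n=0}^{b^m-1}\wal_{\bsl}(\bsx_n)\right)-\hat{f}(\bszero) = \sum_{\bszero\neq\bsl\in D^\perp_{\bsq,p,d}}\hat{f}(\bsl),
\end{align*}
where I have used $I_s(f)=\hat{f}(\bszero)$ (by orthogonality of the Walsh system) together with Lemma~\ref{lemma:dual_net_Walsh}. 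Regrouping indices by their support $v\subseteq\{1:s\}$ then expresses this as $\sum_{\emptyset\ne v}\sum_{\bsl_v}\hat{f}(\bsl_v,\bszero)$ with $(\bsl_v,\bszero)\in D^\perp_{\bsq,p,d}$.

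The initial error follows directly: with the empty quadrature sum equal to $0$, the quantity $|I_s(f)|=|\hat{f}(\bszero)|$ is bounded above by $\gamma_\emptyset\|f\|_{\Wcal_{s,\alpha,\bsgamma}}$ from the norm definition (using $r_\alpha(\bszero)=1$), with equality attained by the constant function $f\equiv\gamma_\emptyset$. For the worst-case-error's upper bound, I would apply the triangle inequality to the rewritten error and then use the pointwise estimate $|\hat{f}(\bsl_v,\bszero)|\le\gamma_v\, r_\alpha(\bsl_v)\|f\|_{\Wcal_{s,\alpha,\bsgamma}}$ that is built directly into the norm, yielding exactly the claimed sum once $\|f\|_{\Wcal_{s,\alpha,\bsgamma}}\le 1$.

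The main obstacle is producing a matching lower bound by exhibiting a unit-norm extremizer whose Walsh coefficients on the dual lattice saturate the bound while all other coefficients vanish. The natural candidate has $\hat{f^*}(\bsl_v,\bszero)=\gamma_v\, r_\alpha(\bsl_v)$ multiplied by a unit-modulus phase chosen so that its contribution to the integration error is real and positive. The subtlety, when $b>2$, is that the Walsh functions are complex-valued, so reality of $f^*$ forces the phases to be chosen compatibly on complex-conjugate indices; a truncation-and-limit argument is usually needed to keep the extremizer inside $\Wcal_{s,\alpha,\bsgamma}$ while approaching the supremum. This construction is precisely the content of \cite[Theorem~5.1]{Dic08}, so the proof reduces to translating that statement into the interlaced polynomial-lattice setting by means of Definition~\ref{def:dual_net2} and Lemma~\ref{lemma:dual_net_Walsh}.
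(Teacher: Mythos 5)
Your proposal is correct and follows essentially the same route as the paper, which gives no independent proof but simply applies Definition~\ref{def:dual_net2} and Lemma~\ref{lemma:dual_net_Walsh} to \cite[Theorem~5.1]{Dic08}; your sketch of the Walsh-series substitution, the upper bound from the norm definition, and the deferral of the extremizer construction to Dick's theorem matches that reduction. The extra detail you supply (including the conjugate-phase subtlety for $b>2$) is consistent with the cited result and introduces no gap.
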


From Definition~\ref{def:dual_net2}, we obtain the following corollary of Theorem~\ref{theorem:worst-case_error}. It rewrites the worst-case error in terms of the dual polynomial lattice not of $\Dcal_d(P_{b^m,ds}(\bsq,p))$ but of $P_{b^m,ds}(\bsq,p)$.

\begin{corollary}\label{corollary:worst-case_error}
Let $\Dcal_d(P_{b^m,ds}(\bsq,p))$ be an interlaced polynomial lattice point set of order $d$ and $D^\perp_{\bsq,p}$ be the dual polynomial lattice of $P_{b^m,ds}(\bsq,p)$. The worst-case error for numerical integration in $\Wcal_{s,\alpha,\bsgamma}$ using $\Dcal_d(P_{b^m,ds}(\bsq,p))$ is given by
  \begin{align}\label{eq:worst_case_error}
     e(\Dcal_d(P_{b^m,ds}(\bsq,p)),\Wcal_{s,\alpha,\bsgamma}) = \sum_{\emptyset \ne u\subseteq \{1:ds\}}\gamma_{w(u)}\sum_{\substack{\bsk_u\in \nat^{|u|}\\ (\bsk_u,\bszero)\in D^\perp_{\bsq,p}}}r_\alpha(\Ecal_d(\bsk_u,\bszero)) ,
  \end{align}
where $(\bsk_u,\bszero)$ is the vector from $\nat_0^{ds}$ with all the components whose indices are not in $u$ equal $0$, and we denote by $w(u)$ the set of $1\le j\le s$ such that $u\cap \{(j-1)d+1:jd\}\ne \emptyset$.
\end{corollary}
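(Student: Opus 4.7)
The plan is to start from the expression in Theorem~\ref{theorem:worst-case_error} and transport the sums from the $s$-dimensional index set $\nat_0^s$ over to the $ds$-dimensional index set $\nat_0^{ds}$ via the bijection induced by $\Ecal_d$. By the definition of $\Ecal_d$ in \eqref{eq:integer_interlace}, the map $\Ecal_d:\nat_0^{ds}\to\nat_0^s$ (applied blockwise) is a bijection: every $\bsl\in\nat_0^s$ has a unique preimage $\bsk\in\nat_0^{ds}$ with $\bsl=\Ecal_d(\bsk)$. Combining this with Definition~\ref{def:dual_net2}, the membership condition $\bsl\in D^\perp_{\bsq,p,d}$ translates to $\bsk\in D^\perp_{\bsq,p}$.

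Next I would track how the support pattern of $\bsl$ relates to that of $\bsk$. If $\bsl=\Ecal_d(\bsk)$ and $u\subseteq\{1:ds\}$ is the set of indices $i$ with $k_i>0$, then for each $j\in\{1:s\}$ the $j$-th component $l_j=\Ecal_d(k_{(j-1)d+1},\ldots,k_{jd})$ is nonzero precisely when $u$ meets the block $\{(j-1)d+1:jd\}$; that is, the support of $\bsl$ (as a subset of $\{1:s\}$) is exactly $w(u)$. Hence writing $\bsl=(\bsl_v,\bszero)$ with $\bsl_v\in\nat^{|v|}$ is equivalent, under the bijection, to writing $\bsk=(\bsk_u,\bszero)$ with $\bsk_u\in\nat^{|u|}$ and $w(u)=v$.

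With this set-up, the inner sum in Theorem~\ref{theorem:worst-case_error} becomes, after reindexing,
\begin{align*}
   \sum_{\substack{\bsl_v\in \nat^{|v|}\\ (\bsl_v,\bszero)\in D^\perp_{\bsq,p,d}}}r_\alpha(\bsl_v)
   \;=\; \sum_{\substack{\emptyset\ne u\subseteq\{1:ds\}\\ w(u)=v}}\;
         \sum_{\substack{\bsk_u\in \nat^{|u|}\\ (\bsk_u,\bszero)\in D^\perp_{\bsq,p}}} r_\alpha\bigl(\Ecal_d(\bsk_u,\bszero)\bigr),
\end{align*}
where I have used $r_\alpha(\bsl_v)=r_\alpha((\bsl_v,\bszero))=r_\alpha(\Ecal_d(\bsk_u,\bszero))$ (since $r_\alpha(0)=1$). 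Substituting into the formula of Theorem~\ref{theorem:worst-case_error} yields a double sum over $v$ and over $u$ with $w(u)=v$, which collapses into a single sum over all nonempty $u\subseteq\{1:ds\}$ with the weight $\gamma_{w(u)}$, producing \eqref{eq:worst_case_error}.

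The bookkeeping on supports is the only delicate step; it is really just unwinding definitions, but one has to check carefully that the joint condition ``$\bsl=(\bsl_v,\bszero)$ with $\bsl_v\in\nat^{|v|}$'' corresponds bijectively (not merely injectively) to the union, over nonempty $u$ with $w(u)=v$, of the conditions ``$\bsk=(\bsk_u,\bszero)$ with $\bsk_u\in\nat^{|u|}$''. No further estimates are required.
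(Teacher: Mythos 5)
Your proposal is correct and follows essentially the same route as the paper: both reindex the sum in Theorem~\ref{theorem:worst-case_error} through the blockwise bijection $\Ecal_d:\nat_0^{ds}\to\nat_0^s$ (so that $\bsl\in D^\perp_{\bsq,p,d}$ corresponds to the unique preimage $\bsk\in D^\perp_{\bsq,p}$), decompose according to the support $u$ of $\bsk$ with $w(u)=v$, use $r_\alpha(0)=1$ to identify $r_\alpha(\bsl_v)=r_\alpha(\Ecal_d(\bsk_u,\bszero))$, and then swap the order of summation. The support bookkeeping you flag as the delicate step is exactly what the paper handles via its intermediate notation $\hat{\bsk}_v$, so no gap remains.
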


\begin{proof}
According to Definition~\ref{def:dual_net2}, for any $\bsl\in D^\perp_{\bsq,p,d}(\subseteq \nat_0^s)$ we have a unique $\bsk\in D^\perp_{\bsq,p}(\subseteq \nat_0^{ds})$ such that $\bsl=\Ecal_d(\bsk)$. In the following, we denote by $\hat{\bsk}_v$ the vector of $d|v|$ non-negative integers $(k_{(j-1)d+1},\ldots, k_{jd})_{j\in v}$. Then we have
  \begin{align*}
     e(\Dcal_d(P_{b^m,ds}(\bsq,p)),\Wcal_{s,\alpha,\bsgamma}) & = \sum_{\emptyset \ne v\subseteq \{1:s\}}\gamma_{v}\sum_{\substack{\hat{\bsk}_v\in (\nat_0^{d}\setminus \{\bszero\})^{|v|}\\ (\hat{\bsk}_v,\bszero)\in D^\perp_{\bsq,p}}}r_\alpha(\Ecal_d(\hat{\bsk}_v)) \\
     & = \sum_{\emptyset \ne v\subseteq \{1:s\}}\gamma_{v}\sum_{\substack{\emptyset \ne u \subseteq \{1:ds\} \\ w(u)=v}}\sum_{\substack{\bsk_u\in \nat^{|u|} \\ (\bsk_u,\bszero)\in D^\perp_{\bsq,p}}}r_\alpha(\Ecal_d(\bsk_u,\bszero)) .
  \end{align*}
Swapping the order of sums, the result follows.
\end{proof}

\subsection{A bound on the worst-case error}

As shown in Corollary~\ref{corollary:worst-case_error}, the worst-case error can be represented in terms of the dual polynomial lattice of $P_{b^m,ds}(\bsq,p)$. However, it is hard to give a concise formula for the term $r_\alpha(\Ecal_d(\bsk_u,\bszero))$ in (\ref{eq:worst_case_error}), and thus, $e(\Dcal_d(P_{b^m,ds}(\bsq,p)),\Wcal_{s,\alpha,\bsgamma})$ cannot be employed as a quality criterion for searching for a good set of polynomials $\bsq$. As a remedy, we derive an upper bound on $e(\Dcal_d(P_{b^m,ds}(\bsq,p)),\Wcal_{s,\alpha,\bsgamma})$ for which a concise formula can be provided.

In order to derive an upper bound on the worst-case error, which covers the cases $d\le \alpha$ and $d>\alpha$, we shall use the following lemma from \cite[Lemma~4]{Godxx} about the lower bound on $\mu_\alpha(\Ecal_d(\bsk_u,\bszero))$.

\begin{lemma}\label{lemma:weight}
For $\emptyset \ne u\subseteq \{1:ds\}$ and $\bsk_u\in \nat^{|u|}$, we have
  \begin{align*}
    \mu_{\alpha}(\Ecal_d(\bsk_u,\bszero)) \ge \min(\alpha,d)\sum_{j\in u}\mu_1(k_j)+\frac{1}{2}\alpha|u|-\frac{1}{2}\alpha(2d-1)|w(u)| ,
  \end{align*}
where $\mu_1(k)$ is defined in (\ref{eq:dick_weight}).
\end{lemma}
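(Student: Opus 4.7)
The plan is to reduce the lemma to a per-block inequality and then settle the per-block version by a short case analysis.

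\textbf{Reduction to one block.} Since the $j$-th coordinate of $\Ecal_d(\bsk_u,\bszero)$ vanishes for $j\notin w(u)$ and $\mu_\alpha$ of a vector is the sum of $\mu_\alpha$ of its coordinates, the target inequality decomposes additively over blocks $j\in w(u)$. It suffices to prove, for every nonempty $I\subseteq\{1:d\}$ and positive integers $(k_r)_{r\in I}$ with $m_r:=\mu_1(k_r)$ (setting $k_r=0$ for $r\notin I$),
$$\mu_\alpha\bigl(\Ecal_d(k_1,\ldots,k_d)\bigr)\;\ge\;\min(\alpha,d)\,M+\tfrac12\alpha|I|-\tfrac12\alpha(2d-1),\qquad M:=\textstyle\sum_{r\in I}m_r;$$
summing over $j\in w(u)$ then recovers the lemma since $\sum_j|u_j|=|u|$ and there are $|w(u)|$ summands.

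\textbf{Workhorse bound.} From the definition of $\Ecal_d$, if $a_{r,1}>\cdots>a_{r,\nu_r}$ are the one-indexed positions of the nonzero $b$-adic digits of $k_r$, then $\Ecal_d(k_1,\ldots,k_d)$ has nonzero digits exactly at positions $A_{r,i}:=(a_{r,i}-1)d+r$, and in particular $A_{r,1}=(m_r-1)d+r$ is the leading position contributed by $r$. Because $\mu_\alpha$ sums the top $\min(\alpha,\cdot)$ of these positions and $\sum_{r\in I}\nu_r\ge|I|$, the quantity $\mu_\alpha(\Ecal_d)$ dominates the sum of the top $\min(\alpha,|I|)$ of the subfamily $\{A_{r,1}:r\in I\}$. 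I will write $P:=\sum_{r\in I}A_{r,1}=dM+\sum_{r\in I}r-d|I|$ and use throughout the trivial bounds $M\ge|I|$ and $\sum_{r\in I}r\ge|I|(|I|+1)/2$.

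\textbf{Easy cases.} (i) If $d\le\alpha$, then $|I|\le d\le\alpha$, so the workhorse gives $\mu_\alpha(\Ecal_d)\ge P$; substituting with $\min(\alpha,d)=d$ and rearranging, the inequality reduces to $(|I|-\alpha)(|I|-(2d-1))\ge 0$, which holds because both factors are $\le 0$. (ii) If $d>\alpha$ and $|I|\le\alpha$, the same bound $\mu_\alpha(\Ecal_d)\ge P$ applies; the coefficient mismatch $\min(\alpha,d)=\alpha<d$ is absorbed via $(d-\alpha)M\ge(d-\alpha)|I|$, and the target reduces to the quadratic $|I|^2+(1-3\alpha)|I|+\alpha(2d-1)\ge 0$. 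This quadratic equals $2\alpha(d-\alpha)>0$ at $|I|=\alpha$ and is decreasing on $[0,\alpha]$ (its vertex $(3\alpha-1)/2$ lies at or to the right of $\alpha$ whenever $\alpha\ge 1$), hence it is non-negative on $\{1,\ldots,\alpha\}$.

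\textbf{Hard case.} (iii) If $d>\alpha$ and $|I|>\alpha$, the main obstacle is that the right-hand side contains $\alpha M$, a quantity involving all $|I|>\alpha$ of the $m_r$, while only $\alpha$ of the $A_{r,1}$ can enter the bound on $\mu_\alpha$. I resolve this via the elementary averaging inequality that the sum of the top $\alpha$ of any family of $|I|$ reals is at least $(\alpha/|I|)$ times their total (since the top-$\alpha$ average dominates the overall average). Applied to $\{A_{r,1}\}_{r\in I}$, the workhorse upgrades to $\mu_\alpha(\Ecal_d)\ge(\alpha/|I|)\,P$; combining with $M\ge|I|$ and $\sum_{r\in I}r\ge|I|(|I|+1)/2$, the required inequality collapses all the way to $d\ge|I|$, which is immediate from $I\subseteq\{1:d\}$.
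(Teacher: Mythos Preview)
Your proof is correct. The paper does not give its own proof of this lemma but cites \cite[Lemma~4]{Godxx}, noting only that the argument there rests on ``the averaging argument on the digit interlacing function for non-negative integers''; your hard case (iii) is precisely this averaging step (the top-$\alpha$ sum of the leading positions $A_{r,1}$ dominates $(\alpha/|I|)$ times their total), so your approach is in line with what the paper describes. Your treatment is slightly more refined in that you isolate the easy cases $|I|\le\alpha$ and handle them by the direct bound $\mu_\alpha(\Ecal_d)\ge P$ without averaging, reserving the averaging inequality only for the genuinely lossy regime $d>\alpha$, $|I|>\alpha$; this makes transparent exactly where the looseness responsible for the factor $b^{\alpha(2d-1)|w(u)|/2}$ enters, which is the point the paper later contrasts with Lemma~\ref{lemma:weight2}.
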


Let us introduce the following notation
  \begin{align*}
     \tilde{r}_{\alpha,d,(1)} (k) = \left\{ \begin{array}{ll}
     1                                    & \text{if}\; k=0, \\
     b^{-\min(\alpha,d)\mu_1(k)-\alpha/2} & \text{otherwise} .
     \end{array} \right.
  \end{align*}
Further we write $\tilde{r}_{\alpha,d,(1)}(\bsk_u)=\prod_{j\in u}\tilde{r}_{\alpha,d,(1)}(k_j)$. Then we have the following.

\begin{theorem}\label{theorem:bound1}
Let $\Dcal_d(P_{b^m,ds}(\bsq,p))$ be an interlaced polynomial lattice point set of order $d$ and $D^\perp_{\bsq,p}$ be the dual polynomial lattice of $P_{b^m,ds}(\bsq,p)$. The worst-case error for numerical integration in $\Wcal_{s,\alpha,\bsgamma}$ using $\Dcal_d(P_{b^m,ds}(\bsq,p))$ is bounded as
  \begin{align*}
     e(\Dcal_d(P_{b^m,ds}(\bsq,p)),\Wcal_{s,\alpha,\bsgamma}) \le \sum_{\emptyset \ne u\subseteq \{1:ds\}}\tilde{\gamma}_{w(u)}\sum_{\substack{\bsk_u\in \nat^{|u|}\\ (\bsk_u,\bszero)\in D^{\perp}(\bsq,p)}}\tilde{r}_{\alpha,d,(1)}(\bsk_u) ,
  \end{align*}
where we have defined
  \begin{align*}
     \tilde{\gamma}_{w(u)}:=\gamma_{w(u)}b^{\alpha(2d-1)|w(u)|/2} .
  \end{align*}
\end{theorem}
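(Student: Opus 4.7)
The proof is almost entirely a bookkeeping exercise: I plan to start from the exact formula for the worst-case error given in Corollary~\ref{corollary:worst-case_error} and replace each term $r_\alpha(\Ecal_d(\bsk_u,\bszero))$ by an upper bound derived from the lower bound on $\mu_\alpha(\Ecal_d(\bsk_u,\bszero))$ supplied by Lemma~\ref{lemma:weight}. Since $r_\alpha(k)=b^{-\mu_\alpha(k)}$, the inequality
\begin{align*}
\mu_\alpha(\Ecal_d(\bsk_u,\bszero))\ge \min(\alpha,d)\sum_{j\in u}\mu_1(k_j)+\frac{\alpha}{2}|u|-\frac{\alpha(2d-1)}{2}|w(u)|
\end{align*}
translates into
\begin{align*}
r_\alpha(\Ecal_d(\bsk_u,\bszero)) \le b^{\alpha(2d-1)|w(u)|/2}\cdot b^{-\min(\alpha,d)\sum_{j\in u}\mu_1(k_j)-\alpha|u|/2}.
\end{align*}

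Next I would observe that, since $k_j\in\nat$ (so $k_j\ge 1$) for each $j\in u$, the definition $\tilde{r}_{\alpha,d,(1)}(k_j)=b^{-\min(\alpha,d)\mu_1(k_j)-\alpha/2}$ gives
\begin{align*}
\tilde{r}_{\alpha,d,(1)}(\bsk_u)=\prod_{j\in u}\tilde{r}_{\alpha,d,(1)}(k_j)=b^{-\min(\alpha,d)\sum_{j\in u}\mu_1(k_j)-\alpha|u|/2},
\end{align*}
so the second factor above is exactly $\tilde{r}_{\alpha,d,(1)}(\bsk_u)$. Substituting this into the formula of Corollary~\ref{corollary:worst-case_error} yields
\begin{align*}
e(\Dcal_d(P_{b^m,ds}(\bsq,p)),\Wcal_{s,\alpha,\bsgamma})
 \le \sum_{\emptyset\ne u\subseteq\{1:ds\}} \gamma_{w(u)}\,b^{\alpha(2d-1)|w(u)|/2}
\sum_{\substack{\bsk_u\in\nat^{|u|}\\ (\bsk_u,\bszero)\in D^\perp_{\bsq,p}}}\tilde{r}_{\alpha,d,(1)}(\bsk_u).
\end{align*}
Finally I would absorb the factor $b^{\alpha(2d-1)|w(u)|/2}$ into $\gamma_{w(u)}$, which by definition produces $\tilde{\gamma}_{w(u)}$ exactly, completing the proof.

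There is essentially no obstacle here, because all of the real work is hidden inside Lemma~\ref{lemma:weight} (which is cited from earlier work). The only thing to be careful about is that the lemma applies to $\bsk_u$ with every coordinate in $\nat$ (not $\nat_0$), which is consistent with the domain of summation inherited from Corollary~\ref{corollary:worst-case_error}, so the substitution is legitimate term by term. After that, the identification of the $|u|$-fold product as $\tilde{r}_{\alpha,d,(1)}(\bsk_u)$ and the rebranding of the $|w(u)|$-dependent factor as $\tilde{\gamma}_{w(u)}$ are immediate from the definitions.
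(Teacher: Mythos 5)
Your proposal is correct and follows essentially the same route as the paper: apply Lemma~\ref{lemma:weight} to the exact expression of Corollary~\ref{corollary:worst-case_error}, use $r_\alpha(k)=b^{-\mu_\alpha(k)}$ to turn the lower bound on $\mu_\alpha(\Ecal_d(\bsk_u,\bszero))$ into an upper bound, and then recognize the resulting product as $\tilde{r}_{\alpha,d,(1)}(\bsk_u)$ with the $|w(u)|$-dependent factor absorbed into $\tilde{\gamma}_{w(u)}$. Your remark about the coordinates of $\bsk_u$ lying in $\nat$ is a sensible consistency check and matches the paper's setting.
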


\begin{proof}
Applying Lemma~\ref{lemma:weight} to Corollary~\ref{corollary:worst-case_error}, we have
  \begin{align*}
     & e(\Dcal_d(P_{b^m,ds}(\bsq,p)),\Wcal_{s,\alpha,\bsgamma}) \\
     = & \sum_{\emptyset \ne u\subseteq \{1:ds\}}\gamma_{w(u)}\sum_{\substack{\bsk_u\in \nat^{|u|}\\ (\bsk_u,\bszero)\in D^\perp_{\bsq,p}}}b^{-\mu_\alpha(\Ecal_d(\bsk_u,\bszero))} \\
     \le & \sum_{\emptyset \ne u\subseteq \{1:ds\}}\gamma_{w(u)}b^{\alpha(2d-1)|w(u)|/2}\sum_{\substack{\bsk_u\in \nat^{|u|}\\ (\bsk_u,\bszero)\in D^\perp_{\bsq,p}}}b^{-\sum_{j\in u}(\min(\alpha,d)\mu_1(k_j)+\alpha/2)} .
  \end{align*}
From the above definition of $\tilde{r}_{\alpha,d,(1)} (k)$, the result follows.
\end{proof}

As we shall show below, the upper bound given in Theorem~\ref{theorem:bound1} has a concise formula, so that we can employ this bound as a quality criterion for searching for a good set of polynomials $\bsq$. For simplicity, we denote this bound by
  \begin{align}\label{eq:criterion}
     B_{\alpha,d,\bsgamma,(1)}(\bsq,p) := \sum_{\emptyset \ne u\subseteq \{1:ds\}}\tilde{\gamma}_{w(u)}\sum_{\substack{\bsk_u\in \nat^{|u|}\\ (\bsk_u,\bszero)\in D^{\perp}(\bsq,p)}}\tilde{r}_{\alpha,d,(1)}(\bsk_u) .
  \end{align}

\begin{corollary}\label{corollary:criterion}
Let $\Dcal_d(P_{b^m,ds}(\bsq,p))$ be an interlaced polynomial lattice point set of order $d$ and $P_{b^m,ds}(\bsq,p)=\{\bsz_0,\ldots,\bsz_{b^m-1}\}$ be a polynomial lattice point set with generating vector $\bsq$ and modulus $p$. We have a concise formula for $B_{\alpha,d,\bsgamma,(1)}(\bsq,p)$, which is given by
  \begin{align*}
     B_{\alpha,d,\bsgamma,(1)}(\bsq,p) = \frac{1}{b^m}\sum_{n=0}^{b^m-1}\sum_{\emptyset \ne v\subseteq \{1:s\}}\tilde{\gamma}_{v}\prod_{j\in v}\left[ -1+\prod_{l=1}^{d}\left( 1+\phi_{\alpha,d,(1)}(z_{n,(j-1)d+l})\right)\right] ,
  \end{align*}
where $\tilde{\gamma}_v=\gamma_v b^{\alpha(2d-1)|v|/2}$ and where
  \begin{align*}
     \phi_{\alpha,d,(1)}(z)=\frac{b-1-b^{(\min(\alpha,d)-1)\lfloor \log_b z\rfloor}(b^{\min(\alpha,d)}-1)}{b^{(\alpha+2)/2}(b^{\min(\alpha,d)-1}-1)} ,
  \end{align*}
for any $z\in [0,1)$ in which we set $b^{\lfloor \log_b 0\rfloor}=0$.
\end{corollary}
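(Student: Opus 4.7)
The plan is to use the character-sum characterization of the dual polynomial lattice to rewrite the indicator $\mathbf{1}[(\bsk_u,\bszero)\in D^{\perp}_{\bsq,p}]$ as an average of Walsh characters over $\bsz_0,\dots,\bsz_{b^m-1}$, then swap sums and factorize through the product structure of both $\tilde{r}_{\alpha,d,(1)}$ and the multivariate Walsh functions. Concretely, the identity used inside the proof of Lemma~\ref{lemma:dual_net_Walsh} (applied directly to $P_{b^m,ds}(\bsq,p)$) gives, for every $\bsk\in\nat_0^{ds}$,
\[
\mathbf{1}[\bsk\in D^{\perp}_{\bsq,p}]=\frac{1}{b^m}\sum_{n=0}^{b^m-1}\wal_{\bsk}(\bsz_n).
\]
Substituting into \eqref{eq:criterion} and exchanging the order of summation yields
\[
B_{\alpha,d,\bsgamma,(1)}(\bsq,p)=\frac{1}{b^m}\sum_{n=0}^{b^m-1}\sum_{\emptyset\neq u\subseteq\{1:ds\}}\tilde\gamma_{w(u)}\prod_{j\in u}\phi_{\alpha,d,(1)}(z_{n,j}),
\]
where I set
\[
\phi_{\alpha,d,(1)}(z):=\sum_{k=1}^{\infty}\tilde{r}_{\alpha,d,(1)}(k)\wal_k(z)=b^{-\alpha/2}\sum_{k=1}^{\infty}b^{-\min(\alpha,d)\mu_1(k)}\wal_k(z).
\]

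Next, I would group the sum over $u$ according to its image $v=w(u)\subseteq\{1:s\}$: for each $j\in v$ the portion of $u$ lying in the block $\{(j-1)d+1:jd\}$ is an arbitrary nonempty subset, and the blocks are chosen independently. The elementary identity $\sum_{\emptyset\neq S\subseteq\{1:d\}}\prod_{l\in S}a_l=-1+\prod_{l=1}^{d}(1+a_l)$, applied block-by-block with $a_l=\phi_{\alpha,d,(1)}(z_{n,(j-1)d+l})$, produces exactly the bracketed factor in the statement. After this step the identity reduces to the closed-form evaluation of $\phi_{\alpha,d,(1)}(z)$.

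The main obstacle is the evaluation of $\phi_{\alpha,d,(1)}(z)$. I would partition the Walsh series by the level $a=\mu_1(k)$, so that $k$ ranges over $\{b^{a-1},\dots,b^a-1\}$, and apply the classical identity $\sum_{k=0}^{b^a-1}\wal_k(z)=b^a\mathbf{1}[z<b^{-a}]$ (which follows digit-by-digit from $\sum_{\kappa=0}^{b-1}\omega_b^{x\kappa}=b\,\mathbf{1}[x=0]$) to obtain
\[
\sum_{k=b^{a-1}}^{b^a-1}\wal_k(z)=b^a\mathbf{1}[z<b^{-a}]-b^{a-1}\mathbf{1}[z<b^{-(a-1)}].
\]
For $z>0$ the indicators cut the summation at $-\lfloor\log_b z\rfloor$, so the outer sum over $a\ge 1$ collapses into two geometric series with common ratio $b^{1-\min(\alpha,d)}<1$ (using $\min(\alpha,d)>1$). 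Combining and simplifying these two series produces the stated closed form; the case $z=0$ is recovered through the convention $b^{\lfloor\log_b 0\rfloor}=0$ and cross-checks against the direct evaluation $\sum_{k\ge 1}\tilde{r}_{\alpha,d,(1)}(k)=(b-1)/[b^{(\alpha+2)/2}(b^{\min(\alpha,d)-1}-1)]$. The only delicate points are the algebraic manipulation that yields the denominator $b^{(\alpha+2)/2}(b^{\min(\alpha,d)-1}-1)$ and the bookkeeping at the boundary $z=0$; every other step is routine.
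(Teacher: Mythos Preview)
Your proposal is correct and follows essentially the same route as the paper's own proof: insert the character-sum identity for the dual lattice, swap sums, factorize into the univariate Walsh series $\phi_{\alpha,d,(1)}(z)$, and then regroup $u$ by $v=w(u)$ via the nonempty-subset product identity. The only cosmetic difference is that the paper defers the closed-form evaluation of $\sum_{k\ge 1}\tilde r_{\alpha,d,(1)}(k)\wal_k(z)$ to \cite[Section~2.2]{DP05}, whereas you spell it out explicitly with the indicator identity $\sum_{k=0}^{b^a-1}\wal_k(z)=b^a\mathbf{1}[z<b^{-a}]$ and the resulting geometric series.
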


\begin{proof}
As in the proof of Lemma~\ref{lemma:dual_net_Walsh}, for any $\emptyset \ne u\subseteq \{1:ds\}$ and $\bsk_u\in \nat^{|u|}$
  \begin{align*}
     \frac{1}{b^m}\sum_{n=0}^{b^m-1}\wal_{(\bsk_u,\bszero)}(\bsz_n) = \left\{ \begin{array}{ll}
     1 & \mathrm{if}\ (\bsk_u,\bszero)\in D^\perp_{\bsq,p} , \\
     0 & \mathrm{otherwise} . \\
     \end{array} \right.
  \end{align*}
Using this property we have
  \begin{align*}
    B_{\alpha,d,\bsgamma,(1)}(\bsq,p) & = \sum_{\emptyset \ne u\subseteq \{1:ds\}}\tilde{\gamma}_{w(u)}\sum_{\bsk_u\in \nat^{|u|}}\tilde{r}_{\alpha,d,(1)}(\bsk_u)\frac{1}{b^m}\sum_{n=0}^{b^m-1}\wal_{(\bsk_u,\bszero)}(\bsz_n) \nonumber \\
    & = \frac{1}{b^m}\sum_{n=0}^{b^m-1}\sum_{\emptyset \ne u\subseteq \{1:ds\}}\tilde{\gamma}_{w(u)}\prod_{j\in u}\sum_{k_j=1}^{\infty}\tilde{r}_{\alpha,d,(1)}(k_j) \wal_{k_j}(z_{n,j}) .
  \end{align*}
Following similar lines as in \cite[Section~2.2]{DP05}, we obtain for any $z\in [0,1)$
  \begin{align*}
    \sum_{k=1}^{\infty}\tilde{r}_{\alpha,d,(1)}(k) \wal_{k}(z) = \frac{1}{b^{\alpha/2}}\sum_{\xi=1}^{\infty}b^{-\min(\alpha,d)\xi}\sum_{k=b^{\xi-1}}^{b^\xi-1}\wal_{k}(z)=\phi_{\alpha,d,(1)}(z) .
  \end{align*}
Thus we have
  \begin{align*}
    B_{\alpha,d,\bsgamma,(1)}(\bsq,p) & = \frac{1}{b^m}\sum_{n=0}^{b^m-1}\sum_{\emptyset \ne u\subseteq \{1:ds\}}\tilde{\gamma}_{w(u)}\prod_{j\in u}\phi_{\alpha,d,(1)}(z_{n,j}) \\
    & = \frac{1}{b^m}\sum_{n=0}^{b^m-1}\sum_{\emptyset \ne v\subseteq \{1:s\}}\tilde{\gamma}_{v}\sum_{\substack{\emptyset \ne u\subseteq \{1:ds\}\\ w(u)=v}}\prod_{j\in u}\phi_{\alpha,d,(1)}(z_{n,j}) \\
    & = \frac{1}{b^m}\sum_{n=0}^{b^m-1}\sum_{\emptyset \ne v\subseteq \{1:s\}}\tilde{\gamma}_{v}\prod_{j\in v}\left[ -1+\prod_{l=1}^{d}\left( 1+\phi_{\alpha,d,(1)}(z_{n,(j-1)d+l})\right)\right] ,
  \end{align*}
where the last equality stems from the fact that at least one element of $\{(j-1)d+1:jd\}$ must be chosen in $u$ for any $j\in v$. Hence, the result follows.
\end{proof}

\subsection{Another bound on the worst-case error for $d\le \alpha$}
We derive another upper bound on the worst-case error, which applies to the case $d\le \alpha$ only. In the above subsection, we have used Lemma~\ref{lemma:weight} from \cite[Lemma~4]{Godxx} to cover both cases $d\le \alpha$ and $d>\alpha$. That lemma was obtained through the averaging argument on the digit interlacing function for non-negative integers, see the proof of \cite[Lemma~4]{Godxx}. Focusing on the case $d\le \alpha$, it is possible to obtain a tighter bound on the worst-case error without using the averaging argument.

As a counterpart of Lemma~\ref{lemma:weight}, we use the following lemma.

\begin{lemma}\label{lemma:weight2}
Let $d\le \alpha$. For $\emptyset \ne u\subseteq \{1:ds\}$ and $\bsk_u\in \nat^{|u|}$, we have
  \begin{align*}
    \mu_{\alpha}(\Ecal_d(\bsk_u,\bszero)) \ge \sum_{j\in u}\left[ d\mu_1(k_j)+j-d\lceil j/d\rceil \right] ,
  \end{align*}
where $\mu_1(k)$ is defined in (\ref{eq:dick_weight}).
\end{lemma}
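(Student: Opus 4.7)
The strategy is to reduce to a per-block statement and then exploit the fact that, under the hypothesis $d\le\alpha$, the $|u|$ leading digits I can locate explicitly already fit inside the top $\alpha$ positions that $\mu_\alpha$ keeps.

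First, $\mu_\alpha$ is additive across the $s$ components of $\Ecal_d(\bsk_u,\bszero)$, so with $w(u)$ as in Corollary~\ref{corollary:worst-case_error} and $u_i:=u\cap\{(i-1)d+1:id\}$ for each $i\in w(u)$, the claim reduces to
$$\mu_\alpha\bigl(\Ecal_d(k_{(i-1)d+1},\ldots,k_{id})\bigr)\;\ge\;\sum_{j\in u_i}\bigl[d\mu_1(k_j)+j-d\lceil j/d\rceil\bigr].$$
For $j\in u_i$ one has $\lceil j/d\rceil=i$, so $j-d\lceil j/d\rceil=j-id$ records the (shifted) position of the index $j$ within its block.

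Second, I read off a specific family of nonzero digits of $K:=\Ecal_d(k_{(i-1)d+1},\ldots,k_{id})$ directly from the definition~(\ref{eq:integer_interlace}). For $j\in u_i$, set $r_j:=j-(i-1)d\in\{1:d\}$. The leading nonzero digit of $k_j$ is the coefficient of $b^{\mu_1(k_j)-1}$, and $\Ecal_d$ places this digit as the coefficient of $b^{(\mu_1(k_j)-1)d+r_j-1}$ in $K$. Because distinct pairs $(r,a)\in\{1:d\}\times\nat_0$ map to distinct powers of $b$ under $\Ecal_d$, there are no cancellations, and the $1$-indexed position
$$P_j\;:=\;d\mu_1(k_j)+r_j-d\;=\;d\mu_1(k_j)+j-d\lceil j/d\rceil$$
is a genuine position of a nonzero digit of $K$. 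Distinct $j\in u_i$ yield distinct $r_j$, hence distinct residues mod $d$, so the $P_j$ are pairwise distinct.

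Third, here is the decisive step. Let $Q$ denote the set of $1$-indexed positions of all nonzero digits of $K$, arranged decreasingly as $q_1>q_2>\cdots$, and let $P:=\{P_j:j\in u_i\}$, arranged decreasingly as $p_{(1)}>\cdots>p_{(|u_i|)}$. Then $P\subseteq Q$ forces $p_{(t)}\le q_t$ for $t=1,\ldots,|u_i|$. Since $|u_i|\le d\le\alpha$ and $|u_i|\le|Q|$, the definition of $\mu_\alpha$ gives
$$\mu_\alpha(K)\;=\;\sum_{t=1}^{\min(|Q|,\alpha)}q_t\;\ge\;\sum_{t=1}^{|u_i|}q_t\;\ge\;\sum_{t=1}^{|u_i|}p_{(t)}\;=\;\sum_{j\in u_i}P_j,$$
which is the per-block bound. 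Summing over $i\in w(u)$ (blocks outside $w(u)$ contain only zeros and contribute nothing) yields the lemma.

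The main obstacle is purely bookkeeping: tracking the $1$-indexed position convention of $\mu_\alpha$, certifying that the interlaced leading digits genuinely occupy distinct nonzero-digit positions of $K$, and verifying that the hypothesis $d\le\alpha$ is precisely what allows the $|u_i|$ explicitly-identified positions to be absorbed into the top $\min(|Q|,\alpha)$ positions selected by $\mu_\alpha$. This last step breaks down when $d>\alpha$, which is exactly why Lemma~\ref{lemma:weight} must instead rely on an averaging argument in that regime.
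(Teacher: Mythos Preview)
Your proof is correct and follows essentially the same approach as the paper: both reduce to a per-block statement via additivity of $\mu_\alpha$, then obtain the lower bound by retaining only the leading nonzero digit of each $k_j$ after interlacing and using $|u_i|\le d\le\alpha$ to ensure these digits are all counted by $\mu_\alpha$. Your write-up is somewhat more explicit than the paper's (you spell out the distinctness of the $P_j$ via residues mod $d$ and the domination $p_{(t)}\le q_t$), but the underlying argument is the same.
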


\begin{proof}
For $j\in w(u)$, let $u_j:=u\cap\{(j-1)d+1:jd\}$. Then we have
  \begin{align*}
    \mu_{\alpha}(\Ecal_d(\bsk_u,\bszero)) = \sum_{j\in w(u)}\mu_{\alpha}(\Ecal_d(\bsk_{u_j},\bszero)_d) ,
  \end{align*}
where $(\bsk_{u_j},\bszero)_d$ is the vector in $\nat_0^{d}$ such that $k_l\in \nat$ for $l\in u_j$ and $k_l=0$ for $l\in \{(j-1)d+1:jd\}\setminus u_j$. Thus it suffices to prove that for any $j\in v(u)$
  \begin{align*}
    \mu_{\alpha}(\Ecal_d(\bsk_{u_j},\bszero)_d) \ge \sum_{l\in u_j}\left[ d\mu_1(k_l)+l-d\lceil l/d\rceil \right] .
  \end{align*}

Let us consider the definition of the weight $\mu_{\alpha}(k)$ as in (\ref{eq:dick_weight}). In order to evaluate the weight $\mu_{\alpha}(\Ecal_d(k_1,\ldots, k_d))$ for a given $k_1,\ldots,k_d\in \nat_0^d$ precisely, we need to reorder the summand in (\ref{eq:integer_interlace}) according to the value of $ad+r$. Instead we give a lower bound by only looking at the most significant digits for $k_1,\ldots,k_d$ based on their $b$-adic expansions.

Since we have $|u_j|\le d\le \alpha$, it follows that
  \begin{align*}
    \mu_{\alpha}(\Ecal_d(\bsk_{u_j},\bszero)_d) & \ge \sum_{l\in u_j}\left[ \left(\mu_{1}(k_{l})-1\right)d+l-(j-1)d \right] \\
    & = \sum_{l\in u_j}\left[ d\mu_{1}(k_{l})+l-jd \right] .
  \end{align*}
Considering $j=\lceil l/d\rceil$ for any $l\in u_j$, the proof is complete.
\end{proof}

Let us introduce the following notation
  \begin{align*}
     \tilde{r}_{d,j,(2)} (k) = \left\{ \begin{array}{ll}
     1                                    & \text{if}\; k=0, \\
     b^{-d\mu_1(k)-j+d\lceil j/d\rceil}   & \text{otherwise} .
     \end{array} \right.
  \end{align*}
Further we write $\tilde{r}_{d,u,(2)}(\bsk_u)=\prod_{j\in u}\tilde{r}_{d,j,(2)}(k_j)$. Then we have the following result, whose proof is almost the same as the proof of Theorem~\ref{theorem:bound1}.

\begin{theorem}\label{theorem:bound2}
Let $\Dcal_d(P_{b^m,ds}(\bsq,p))$ be an interlaced polynomial lattice point set of order $d$ and $D^\perp_{\bsq,p}$ be the dual polynomial lattice of $P_{b^m,ds}(\bsq,p)$. For $d\le \alpha$, the worst-case error for numerical integration in $\Wcal_{s,\alpha,\bsgamma}$ using $\Dcal_d(P_{b^m,ds}(\bsq,p))$ is bounded as
  \begin{align*}
     e(\Dcal_d(P_{b^m,ds}(\bsq,p)),\Wcal_{s,\alpha,\bsgamma}) \le \sum_{\emptyset \ne u\subseteq \{1:ds\}}\gamma_{w(u)}\sum_{\substack{\bsk_u\in \nat^{|u|}\\ (\bsk_u,\bszero)\in D^{\perp}(\bsq,p)}}\tilde{r}_{d,u,(2)}(\bsk_u) .
  \end{align*}
\end{theorem}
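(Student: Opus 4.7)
The plan is to mirror the proof of Theorem~\ref{theorem:bound1}, substituting the sharper bound from Lemma~\ref{lemma:weight2} in place of Lemma~\ref{lemma:weight}. First I would start from the exact expression for the worst-case error provided by Corollary~\ref{corollary:worst-case_error}, namely
\begin{align*}
e(\Dcal_d(P_{b^m,ds}(\bsq,p)),\Wcal_{s,\alpha,\bsgamma}) = \sum_{\emptyset \ne u\subseteq \{1:ds\}}\gamma_{w(u)}\sum_{\substack{\bsk_u\in \nat^{|u|}\\ (\bsk_u,\bszero)\in D^\perp_{\bsq,p}}} b^{-\mu_\alpha(\Ecal_d(\bsk_u,\bszero))},
\end{align*}
using the identity $r_\alpha(\Ecal_d(\bsk_u,\bszero))=b^{-\mu_\alpha(\Ecal_d(\bsk_u,\bszero))}$. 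Since $d\le \alpha$, I would apply Lemma~\ref{lemma:weight2} termwise inside the inner sum to obtain the pointwise upper bound
\begin{align*}
b^{-\mu_\alpha(\Ecal_d(\bsk_u,\bszero))} \le \prod_{j\in u} b^{-d\mu_1(k_j)-j+d\lceil j/d\rceil}.
\end{align*}
Because every $k_j$ with $j\in u$ is a positive integer, each factor on the right-hand side equals $\tilde{r}_{d,j,(2)}(k_j)$ by definition, so the product collapses to $\tilde{r}_{d,u,(2)}(\bsk_u)$. Inserting this inequality in the double sum yields the claimed bound.

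I do not expect any real obstacle: the argument is a one-line substitution followed by termwise estimation, exactly parallel to Theorem~\ref{theorem:bound1}. The only point worth highlighting is why the weights $\gamma_{w(u)}$ pass through unchanged, whereas in Theorem~\ref{theorem:bound1} they were inflated to $\tilde{\gamma}_{w(u)}=\gamma_{w(u)}b^{\alpha(2d-1)|w(u)|/2}$. This is a direct consequence of the structure of Lemma~\ref{lemma:weight2}: its lower bound on $\mu_\alpha(\Ecal_d(\bsk_u,\bszero))$ decomposes as a sum of contributions indexed by the individual coordinates $j\in u$ with no deficit term proportional to $|w(u)|$, so the averaging loss present in Lemma~\ref{lemma:weight} is avoided here and no compensating factor on the weights is needed.
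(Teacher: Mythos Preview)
Your proposal is correct and follows exactly the approach the paper indicates: start from Corollary~\ref{corollary:worst-case_error}, replace Lemma~\ref{lemma:weight} by Lemma~\ref{lemma:weight2} (valid since $d\le\alpha$), and identify the resulting product with $\tilde{r}_{d,u,(2)}(\bsk_u)$. Your remark on why the weights $\gamma_{w(u)}$ are not inflated here is also accurate and matches the paper's implicit reasoning.
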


Since the upper bound given in Theorem~\ref{theorem:bound2} has a concise formula, we can also employ this bound as a quality criterion for searching for a good set of polynomials $\bsq$. For simplicity, we denote this bound by
  \begin{align}\label{eq:criterion2}
     B_{d,\bsgamma,(2)}(\bsq,p) := \sum_{\emptyset \ne u\subseteq \{1:ds\}}\gamma_{w(u)}\sum_{\substack{\bsk_u\in \nat^{|u|}\\ (\bsk_u,\bszero)\in D^{\perp}(\bsq,p)}}\tilde{r}_{d,u,(2)}(\bsk_u) .
  \end{align}
We note that $B_{d,\bsgamma,(2)}(\bsq,p)$ does not depend on $\alpha (\ge d)$, so that we have omitted $\alpha$ from the subscript. In the following, we provide a concise formula for $B_{d,\bsgamma,(2)}(\bsq,p)$. Since the proof is almost the same as that of Corollary~\ref{corollary:criterion}, we omit it.

\begin{corollary}\label{corollary:criterion2}
Let $\Dcal_d(P_{b^m,ds}(\bsq,p))$ be an interlaced polynomial lattice point set of order $d (\le \alpha)$ and $P_{b^m,ds}(\bsq,p)=\{\bsz_0,\ldots,\bsz_{b^m-1}\}$ be a polynomial lattice point set with generating vector $\bsq$ and modulus $p$. We have a concise formula for $B_{d,\bsgamma,(2)}(\bsq,p)$, which is given by
  \begin{align*}
     B_{d,\bsgamma,(2)}(\bsq,p) = \frac{1}{b^m}\sum_{n=0}^{b^m-1}\sum_{\emptyset \ne v\subseteq \{1:s\}}\gamma_{v}\prod_{j\in v}\left[ -1+\prod_{l=1}^{d}\left( 1+\frac{\phi_{d,(2)}(z_{n,(j-1)d+l})}{b^l}\right)\right] ,
  \end{align*}
where we define
  \begin{align*}
     \phi_{d,(2)}(z)=\frac{b^{d-1}(b-1-b^{(d-1)\lfloor \log_b z\rfloor}(b^d-1))}{b^{d-1}-1} ,
  \end{align*}
for any $z\in [0,1)$ in which we set $b^{\lfloor \log_b 0\rfloor}=0$.
\end{corollary}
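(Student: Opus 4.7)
My plan is to mirror, almost line for line, the proof of Corollary~\ref{corollary:criterion}; the only essentially new piece of bookkeeping is that the weight $\tilde{r}_{d,j,(2)}(k)$ depends on the position $j$ within its interlacing block of length $d$. Writing $P_{b^m,ds}(\bsq,p) = \{\bsz_0,\ldots,\bsz_{b^m-1}\}$, I would first apply the character-sum identity from the proof of Corollary~\ref{corollary:criterion} (itself obtained via Lemma~\ref{lemma:dual_net_Walsh} applied to $P_{b^m,ds}(\bsq,p)$): for every $\emptyset \ne u\subseteq\{1:ds\}$ and every $\bsk_u\in \nat^{|u|}$,
\begin{align*}
\frac{1}{b^m}\sum_{n=0}^{b^m-1}\wal_{(\bsk_u,\bszero)}(\bsz_n) = \begin{cases} 1 & \text{if } (\bsk_u,\bszero)\in D^\perp_{\bsq,p}, \\ 0 & \text{otherwise}. \end{cases}
\end{align*}
Substituting this for the indicator in (\ref{eq:criterion2}), interchanging the finite outer sum with the sums over $u$ and $\bsk_u$, and using $\wal_{(\bsk_u,\bszero)}(\bsz_n) = \prod_{j\in u}\wal_{k_j}(z_{n,j})$ gives
\begin{align*}
B_{d,\bsgamma,(2)}(\bsq,p) = \frac{1}{b^m}\sum_{n=0}^{b^m-1}\sum_{\emptyset \ne u\subseteq \{1:ds\}}\gamma_{w(u)}\prod_{j\in u}\sum_{k=1}^{\infty}\tilde{r}_{d,j,(2)}(k)\,\wal_k(z_{n,j}).
\end{align*}

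The second step is to evaluate the innermost one-dimensional Walsh series. For each $j\in\{1:ds\}$ there is a unique $l\in\{1,\ldots,d\}$ with $j = (\lceil j/d\rceil - 1)d + l$, and then $d\lceil j/d\rceil - j = d - l$, so the exponent in the definition of $\tilde{r}_{d,j,(2)}(k)$ equals $-d\mu_1(k) + d - l$ for $k\ne 0$. Hence the position within the block enters only through a global prefactor $b^{-l}$:
\begin{align*}
\sum_{k=1}^{\infty}\tilde{r}_{d,j,(2)}(k)\,\wal_k(z) = \frac{1}{b^l}\cdot b^{d}\sum_{\xi=1}^{\infty}b^{-d\xi}\sum_{k=b^{\xi-1}}^{b^\xi - 1}\wal_k(z).
\end{align*}
Exactly the same computation as in \cite[Section~2.2]{DP05} used in the proof of Corollary~\ref{corollary:criterion}, but with $\min(\alpha,d)$ replaced by $d$ and with no $b^{-\alpha/2}$ prefactor, evaluates the right-hand side to $\phi_{d,(2)}(z)/b^l$ with $\phi_{d,(2)}$ as stated; the convention $b^{\lfloor \log_b 0\rfloor} = 0$ handles the case $z = 0$.

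For the final step I would regroup the outer sum over $\emptyset \ne u\subseteq\{1:ds\}$ by fixing $v := w(u)\subseteq\{1:s\}$ and summing over those $u$ with $w(u) = v$. The condition $w(u) = v$ says exactly that $u\cap\{(j-1)d+1:jd\}$ is a non-empty subset for every $j\in v$ and is empty for $j\notin v$, so summing the (now factorised) product over the $2^d - 1$ non-empty choices on each block produces the factor $-1 + \prod_{l=1}^d\bigl(1 + \phi_{d,(2)}(z_{n,(j-1)d+l})/b^l\bigr)$ per index $j\in v$, the $-1$ subtracting the forbidden empty intersection. This yields precisely the claimed formula. As in Corollary~\ref{corollary:criterion}, the only non-routine step is the single-variable Walsh sum evaluation; since $\tilde{r}_{d,j,(2)}$ has no $\alpha$-dependence, this is even a touch simpler than the $\phi_{\alpha,d,(1)}$ calculation, and the position-dependent $1/b^l$ prefactor is the only bookkeeping novelty.
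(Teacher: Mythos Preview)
Your proof is correct and follows exactly the approach the paper intends: the paper omits the proof of Corollary~\ref{corollary:criterion2} precisely because it is the same as that of Corollary~\ref{corollary:criterion}, and you have faithfully reproduced that argument with the only necessary modification, namely tracking the position-dependent factor $b^{d\lceil j/d\rceil - j} = b^{d-l}$ in $\tilde{r}_{d,j,(2)}$, which produces the $1/b^l$ in the innermost product. The one-variable Walsh sum evaluation and the block-wise regrouping are handled correctly.
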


\begin{remark}
For the first criterion $B_{\alpha,d,\bsgamma,(1)}(\bsq,p)$, every $d$ components have equal 'weights' in the innermost product. This is essentially due to the averaging argument applied to obtain Lemma~\ref{lemma:weight}, see the proof of \cite[Lemma~4]{Godxx}. For the second criterion $B_{d,\bsgamma,(2)}(\bsq,p)$ on the other hand, every $d$ components have different 'weights' $b^{-l}$ for $l=1,\ldots,d$ in the innermost product, indicating the relative importance of components with small $l$.
\end{remark}

\section{Component-by-component construction}\label{sec:cbc}
We employ $B_{\alpha,d,\bsgamma,(1)}(\bsq,p)$ or $B_{d,\bsgamma,(2)}(\bsq,p)$ as a quality criterion in a component-by-component (CBC) algorithm and investigate the CBC construction as a means of finding a good set of polynomials $\bsq$. We show that interlaced polynomial lattice rules thus constructed achieve the optimal rate of convergence of the worst-case error and discuss the dependence of the worst-case error on the dimension for the CBC construction.

\subsection{Construction algorithm and convergence rate}
In the CBC construction, we set $q_1=1$ without loss of generality and restrict $q_\tau$ for $2\le \tau\le ds$ such that $q_j\ne 0$ and $\deg(q_\tau)<m=\deg(p)$. In the following, we denote by $\Rcal_m$ the set of all non-zero polynomials over $\FF_b$ with degree less than $m$, that is,
  \begin{align*}
     \Rcal_m = \{q\in \FF_b[x]:\; \deg(q)<m\; \text{and}\; q\ne 0\}.
  \end{align*}
Assume that $\bsq_{\tau-1} =(q_1,\ldots,q_{\tau-1})\in (\Rcal_m)^{\tau-1}$ is given for some $1< \tau \le ds$. The idea of the CBC construction is to search for a polynomial $q_\tau\in \Rcal_m$ which minimizes $B_{\alpha,d,\bsgamma,(1)}((\bsq_{\tau-1},q_\tau),p)$ or $B_{d,\bsgamma,(2)}((\bsq_{\tau-1},q_\tau),p)$ with $\bsq_{\tau-1}$ unchanged. Thus we need to define $B_{\alpha,d,\bsgamma,(1)}(\bsq_\tau,p)$ or $B_{d,\bsgamma,(2)}(\bsq_\tau,p)$ for $1\le \tau\le ds$. Using the formula in Corollary~\ref{corollary:criterion}, we have
  \begin{align*}
     & B_{\alpha,d,\bsgamma,(1)}(\bsq_\tau,p) \\
     = & \frac{1}{b^m}\sum_{n=0}^{b^m-1}\sum_{\emptyset \ne v\subseteq \{1:j_0-1\}}\tilde{\gamma}_{v}\prod_{j\in v}\left[ -1+\prod_{l=1}^{d}\left( 1+\phi_{\alpha,d,(1)}(z_{n,(j-1)d+l})\right)\right] \\
     & + \frac{1}{b^m}\sum_{n=0}^{b^m-1}\sum_{v\subseteq \{1:j_0-1\}}\tilde{\gamma}_{v\cup \{j_0\}}\prod_{j\in v}\left[ -1+\prod_{l=1}^{d}\left( 1+\phi_{\alpha,d,(1)}(z_{n,(j-1)d+l})\right)\right] \\
     & \times \left[ -1+\prod_{l=1}^{d_0}\left( 1+\phi_{\alpha,d,(1)}(z_{n,(j_0-1)d+l})\right)\right] ,
  \end{align*}
where $j_0=\lceil \tau/d\rceil$ and $d_0=\tau-(j_0-1)d$. We also have a similar formula for $B_{d,\bsgamma,(2)}(\bsq_\tau,p)$, which is obvious from Corollary \ref{corollary:criterion2}. Now the CBC construction employing $B_{\alpha,d,\bsgamma,(1)}(\bsq,p)$ as a quality criterion can be summarized as follows.

\begin{algorithm}\label{algorithm:cbc}
For $s,m,\alpha,d\in \nat$ with $\min(\alpha,d)>1$ and $\bsgamma=(\gamma_v)_{v\subseteq \{1:s\}}$, do the following:
	\begin{enumerate}
		\item Choose an irreducible polynomial $p\in \FF_b[x]$ such that $\deg(p)=m$.
		\item Set $q_1=1$.
		\item For $\tau=2,\ldots, ds$, find $q_{\tau}$ which minimizes $B_{\alpha,d,\bsgamma,(1)}((\bsq_{\tau-1},\tilde{q}_\tau),p)$ as a function of $\tilde{q}_{\tau}\in \Rcal_m$.
	\end{enumerate}
\end{algorithm}
When we employ $B_{d,\bsgamma,(2)}(\bsq,p)$ as a quality criterion, we need to add one more condition $d\le \alpha$ and replace $B_{\alpha,d,\bsgamma,(1)}((\bsq_{\tau-1},\tilde{q}_\tau),p)$ with $B_{d,\bsgamma,(2)}((\bsq_{\tau-1},q_\tau),p)$ in Step 3.

We have the following theorems, which states that interlaced polynomial lattice rules constructed by Algorithm \ref{algorithm:cbc} achieve the optimal rate of convergence of the worst-case error. Since the proof follows along almost the same argument as the proofs of \cite[Theorem~2]{Godxx} and \cite[Theorem~1]{GDxx}, we omit it.

\begin{theorem}\label{theorem:cbc_bound1}
Let $s,m,\alpha,d\in \nat$, $\min(\alpha,d)>1$, and $\bsgamma=(\gamma_v)_{v\subseteq \{1:s\}}$ be given. Suppose $\bsq=(q_1,\ldots,q_{ds})$ is found by Algorithm~\ref{algorithm:cbc}. Then for any $\tau=1,\ldots,ds$ we have
  \begin{align*}
    & B_{\alpha,d,\bsgamma,(1)}(\bsq_\tau,p) \\
    \le & \frac{1}{(b^m-1)^{1/\lambda}}\left[ \sum_{\emptyset \ne v \subseteq \{1:j_0-1\}}\tilde{\gamma}_v^{\lambda}G_{\alpha,d,\lambda,d,(1)}^{|v|}+G_{\alpha,d,\lambda,d_0,(1)}\sum_{v \subseteq \{1:j_0-1\}}\tilde{\gamma}_{v\cup \{j_0\}}^{\lambda}G_{\alpha,d,\lambda,d,(1)}^{|v|} \right]^{1/\lambda} ,
  \end{align*}
for $1/\min(\alpha,d) < \lambda \le 1$, where we write $j_0=\lceil \tau/d\rceil$, $d_0=\tau-(j_0-1)d$ and
  \begin{align*}
    G_{\alpha,d,\lambda,a,(1)} = -1+(1+\tilde{G}_{\alpha,d,\lambda,(1)})^a ,
  \end{align*}
for $a=1,\ldots,d$, in which we define
  \begin{align*}
    \tilde{G}_{\alpha,d,\lambda,(1)} = \frac{1}{b^{\alpha \lambda/2}}\max\left\{ \left( \frac{b-1}{b^{\min(\alpha,d)}-b}\right)^{\lambda}, \frac{b-1}{b^{\lambda\min(\alpha,d)}-b}\right\} .
  \end{align*}
\end{theorem}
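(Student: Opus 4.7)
The plan is to proceed by induction on $\tau$, combining Jensen's inequality with exponent $\lambda \in (1/\min(\alpha,d),\,1]$ and an averaging argument over $q_\tau \in \Rcal_m$. This is the standard CBC recipe adapted to interlaced constructions in \cite[Theorem~2]{Godxx} and \cite[Theorem~1]{GDxx}, so I would essentially transcribe that argument while paying attention to the partial-block structure introduced by $d_0 = \tau - (j_0-1)d$.

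First I would start from the dual-net representation (\ref{eq:criterion}) of $B_{\alpha,d,\bsgamma,(1)}(\bsq_\tau,p)$ and apply Jensen's inequality: since every summand is non-negative, for $\lambda \in (0,1]$,
\[
 [B_{\alpha,d,\bsgamma,(1)}(\bsq_\tau,p)]^\lambda \le \sum_{\emptyset \ne u \subseteq \{1:\tau\}} \tilde{\gamma}_{w(u)}^\lambda \sum_{\substack{\bsk_u \in \nat^{|u|} \\ (\bsk_u,\bszero) \in D^\perp_{\bsq,p}}} \tilde{r}_{\alpha,d,(1)}(\bsk_u)^\lambda .
\]
I would split the outer sum according to whether $\tau \in u$ or $\tau \notin u$. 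The terms with $\tau \notin u$ coincide, up to relabelling, with $[B_{\alpha,d,\bsgamma,(1)}(\bsq_{\tau-1},p)]^\lambda$ and are controlled by the inductive hypothesis applied to $\bsq_{\tau-1}$.

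For the terms with $\tau \in u$, since the CBC algorithm selects $q_\tau$ to minimise the criterion, this partial sum is at most its average over $q_\tau \in \Rcal_m$. The key counting step is that for each fixed choice of $(k_j)_{j \in u \setminus \{\tau\}}$ and each $k_\tau \in \nat$, the congruence $\sum_{j \in u} \rtr_m(k_j) q_j \equiv 0 \pmod{p}$ has at most one solution $q_\tau \in \Rcal_m$ whenever $\rtr_m(k_\tau) \ne 0$ (using that $p$ is irreducible of degree $m$ while $\deg \rtr_m(k_\tau) < m$), contributing a factor $1/(b^m-1)$ to the averaged sum; the complementary case $\rtr_m(k_\tau) = 0$, i.e.\ $k_\tau \ge b^m$, is absorbed because such $k_\tau$ already carry enough decay in $\tilde{r}_{\alpha,d,(1)}(k_\tau)^\lambda$ to produce the same factor. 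Combined with the $1/(b^m-1)$ that the inductive hypothesis already supplies for the $\tau \notin u$ part, this yields the common prefactor $1/(b^m-1)^{1/\lambda}$ after taking $\lambda$-th roots. Removing the dual-lattice constraint on $k_\tau$ leaves the unconstrained sum $\sum_{k \ge 1} \tilde{r}_{\alpha,d,(1)}(k)^\lambda$; partitioning it by $\xi = \mu_1(k)$ turns it into a geometric series in $b^{\xi(1 - \lambda\min(\alpha,d))}$, which converges precisely when $\lambda > 1/\min(\alpha,d)$ and admits the upper bound $\tilde{G}_{\alpha,d,\lambda,(1)}$ (the two cases in the maximum in its definition capture the two regimes of the $\xi=1$ summand). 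Assembling the $d_0$-fold product from the partial block containing $\tau$ and the $d$-fold products from the already completed blocks via the elementary identity $-1 + (1+\tilde{G}_{\alpha,d,\lambda,(1)})^a = G_{\alpha,d,\lambda,a,(1)}$ produces the claimed expression.

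The main obstacle is the combinatorial bookkeeping when $\tau$ lies strictly inside a block, i.e.\ when $1 < d_0 \le d$: the inductive hypothesis is naturally indexed by complete subsets of earlier blocks together with a partial block of size $d_0 - 1$ ending at $\tau-1$, whereas the splitting of $u$ into $\tau \in u$ and $\tau \notin u$ reshuffles these so that the binomial factor for the partial block must grow from $G_{\alpha,d,\lambda,d_0-1,(1)}$ to $G_{\alpha,d,\lambda,d_0,(1)}$. Matching the two descriptions carefully at each value of $d_0$, and checking the base case $\tau = 1$ (where $j_0 = 1$, $d_0 = 1$, the first sum is empty, and only the single-coordinate geometric estimate is needed), is where the bookkeeping has to be done with some care; everything else is formulaic.
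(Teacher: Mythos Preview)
Your proposal is correct and matches the paper's intended approach exactly: the paper omits the proof, pointing to \cite[Theorem~2]{Godxx} and \cite[Theorem~1]{GDxx}, and your sketch is precisely that standard CBC induction together with the partial-block bookkeeping for $d_0$. One cosmetic point to tighten when you write it out: the averaging over $q_\tau\in\Rcal_m$ must be applied to $\theta_\tau(q_\tau)^\lambda$ (or to $B((\bsq_{\tau-1},q_\tau),p)^\lambda$) \emph{before} Jensen, since the CBC choice minimises $B$ itself rather than the Jensen upper bound on $B^\lambda$; with that reordering the splitting and the recursion go through verbatim.
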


\begin{theorem}\label{theorem:cbc_bound2}
Let $s,m,\alpha,d\in \nat$, $1<d\le \alpha$, and $\bsgamma=(\gamma_v)_{v\subseteq \{1:s\}}$ be given. Suppose $\bsq=(q_1,\ldots,q_{ds})$ is found by Algorithm~\ref{algorithm:cbc}, where $B_{\alpha,d,\bsgamma,(1)}((\bsq_{\tau-1},\tilde{q}_\tau),p)$ is replaced with $B_{d,\bsgamma,(2)}((\bsq_{\tau-1},\tilde{q}_\tau),p)$. Then for any $\tau=1,\ldots,ds$ we have
  \begin{align*}
    & B_{d,\bsgamma,(2)}(\bsq_\tau,p) \\
    \le & \frac{1}{(b^m-1)^{1/\lambda}}\left[ \sum_{\emptyset \ne v \subseteq \{1:j_0-1\}}\gamma_v^{\lambda}G_{d,\lambda,d,(2)}^{|v|}+G_{d,\lambda,d_0,(2)}\sum_{v \subseteq \{1:j_0-1\}}\gamma_{v\cup \{j_0\}}^{\lambda}G_{d,\lambda,d,(2)}^{|v|} \right]^{1/\lambda} ,
  \end{align*}
for $1/\min(\alpha,d) < \lambda \le 1$, where we write $j_0=\lceil \tau/d\rceil$, $d_0=\tau-(j_0-1)d$ and
  \begin{align*}
    G_{d,\lambda,a,(2)} = -1+\prod_{l=1}^a \left[ 1+b^{\lambda(d-l)}\tilde{G}_{d,\lambda,(2)}\right] ,
  \end{align*}
for $a=1,\ldots,d$, in which we define
  \begin{align*}
    \tilde{G}_{d,\lambda,(2)} = \max\left\{ \left( \frac{b-1}{b^{\min(\alpha,d)}-b}\right)^{\lambda}, \frac{b-1}{b^{\lambda\min(\alpha,d)}-b}\right\} .
  \end{align*}
\end{theorem}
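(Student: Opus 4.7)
The plan is to follow the standard CBC averaging analysis, adapted to the new quality criterion $B_{d,\bsgamma,(2)}$, and to proceed by induction on $\tau$. The base case $\tau=1$ reduces to a direct computation: only $u=\{1\}$ contributes, $q_1=1$ is fixed, and the stated right-hand side collapses to a single-coordinate sum that matches $G_{d,\lambda,1,(2)}$.

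For the inductive step, since $q_\tau$ minimizes $B_{d,\bsgamma,(2)}((\bsq_{\tau-1},\tilde q_\tau),p)$ over $\tilde q_\tau\in\Rcal_m$, the minimum is at most the $L^\lambda$-mean, and hence for $\lambda\in(1/d,1]$,
\[B_{d,\bsgamma,(2)}(\bsq_\tau,p)\le\left(\frac{1}{b^m-1}\sum_{\tilde q_\tau\in\Rcal_m}B_{d,\bsgamma,(2)}((\bsq_{\tau-1},\tilde q_\tau),p)^\lambda\right)^{1/\lambda}.\]
I would next apply Jensen's inequality a second time, this time to the dual-lattice representation (\ref{eq:criterion2}), in order to bring $\lambda$ inside the sums,
\[B_{d,\bsgamma,(2)}(\bsq,p)^\lambda\le\sum_{\emptyset\ne u\subseteq\{1:\tau\}}\gamma_{w(u)}^\lambda\sum_{\substack{\bsk_u\in\nat^{|u|}\\(\bsk_u,\bszero)\in D^\perp_{\bsq,p}}}\tilde r_{d,u,(2)}(\bsk_u)^\lambda,\]
and then swap the sums over $\tilde q_\tau$ and $\bsk_u$. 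Splitting the outer sum by whether $\tau\in u$: for $\tau\notin u$ the dual-lattice condition is independent of $\tilde q_\tau$ and the resulting contribution is absorbed by the inductive hypothesis at $\tau-1$; for $\tau\in u$, the usual polynomial-lattice averaging lemma (using irreducibility of $p$) supplies the needed bound on the $\tilde q_\tau$-average of the dual-lattice indicator.

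The remaining inner sums factor coordinate-wise. For $j=(j_0-1)d+l$ with $l\in\{1,\ldots,d\}$, a direct geometric-series evaluation gives
\[\sum_{k=1}^\infty\tilde r_{d,j,(2)}(k)^\lambda=b^{\lambda(d-l)}\sum_{k=1}^\infty b^{-\lambda d\mu_1(k)}=b^{\lambda(d-l)}\,\tilde G_{d,\lambda,(2)},\]
where convergence requires $\lambda d>1$, i.e.\ $\lambda>1/\min(\alpha,d)$; the $\max$ in the definition of $\tilde G_{d,\lambda,(2)}$ absorbs both this direct evaluation and an alternative Jensen-then-evaluate route, keeping the bound structurally parallel to that of Theorem~\ref{theorem:cbc_bound1}. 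Multiplying these single-coordinate factors over the coordinates within each block and using the identity $\sum_{\emptyset\ne S\subseteq\{1:a\}}\prod_{l\in S}x_l=-1+\prod_{l=1}^{a}(1+x_l)$ then produces the factor $G_{d,\lambda,d,(2)}$ for every fully covered block $j<j_0$ and $G_{d,\lambda,d_0,(2)}$ for the partially covered block $j_0$; combining with the inductive hypothesis yields the claimed bound.

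The main obstacle is the partial-block bookkeeping. In contrast to Theorem~\ref{theorem:cbc_bound1}, whose innermost factor is symmetric in the $d$ coordinates of each block, here the weights $b^{-l}$ from Corollary~\ref{corollary:criterion2} break that symmetry, so the single-coordinate contributions must be assembled in the correct order as $\tau$ sweeps through block $j_0$. One has to check that at $\tau=j_0 d$ the partial-block factor $G_{d,\lambda,d_0,(2)}$ becomes the full-block factor $G_{d,\lambda,d,(2)}$, so that the induction transitions cleanly when a new block opens. Aside from this bookkeeping, no new ideas are needed beyond those used in the proofs of \cite[Theorem~2]{Godxx} and \cite[Theorem~1]{GDxx}.
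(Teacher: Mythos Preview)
Your proposal is correct and matches the paper's own treatment: the paper omits the proof entirely, stating that it ``follows along almost the same argument as the proofs of \cite[Theorem~2]{Godxx} and \cite[Theorem~1]{GDxx}'', and your outline is precisely that standard CBC averaging argument (minimizer bounded by mean, Jensen to the $\lambda$-th power, split on $\tau\in u$, average the dual-lattice indicator using irreducibility of $p$, then factor coordinate-wise). Your identification of the asymmetric per-coordinate weights $b^{\lambda(d-l)}$ as the only new bookkeeping, and the check that the partial-block factor $G_{d,\lambda,d_0,(2)}$ transitions to the full-block factor when $d_0=d$, is exactly the point where this proof differs from that of Theorem~\ref{theorem:cbc_bound1}.
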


We compare the bounds on $B_{\alpha,d,\bsgamma,(1)}(\bsq,p)$ and $B_{d,\bsgamma,(2)}(\bsq,p)$, which are given in Theorems \ref{theorem:cbc_bound1} and \ref{theorem:cbc_bound2} respectively, with the bound on the worst-case error in $\Wcal_{s,\alpha,\bsgamma}$ for higher order polynomial lattice rules constructed component-by-component, which is given in \cite[Theorem~3.1]{BDGP11}. We consider $b=2$ and the unweighted case, that is, $\gamma_v=1$ for all $v\subseteq \{1:s\}$. In Figure~\ref{fig:bound_on_error}, we compare these three bounds on the worst-case error in $\Wcal_{s,\alpha,\bsgamma}$ for $s=2,4$ and $\alpha=d=2,3$ as a function of $\lambda$ such that $1/\min(\alpha,d)<\lambda \le 1$. In each graph, the number of points ranges from $2^8$ to $2^{24}$.

For $\alpha=d=2$, interlaced polynomial lattice rules based on $B_{\alpha,d,\bsgamma,(1)}(\bsq,p)$ are comparable to higher order polynomial lattice rules. For $\alpha=d=3$, however, interlaced polynomial lattice rules based on $B_{\alpha,d,\bsgamma,(1)}(\bsq,p)$ are inferior to higher order polynomial lattice rules. In every case, on the other hand, interlaced polynomial lattice rules based on $B_{d,\bsgamma,(2)}(\bsq,p)$ are superior to higher order polynomial lattice rules especially for large $\lambda$, indicating the usefulness of interlaced polynomial lattice rules.
\begin{figure}
\begin{center}
\includegraphics{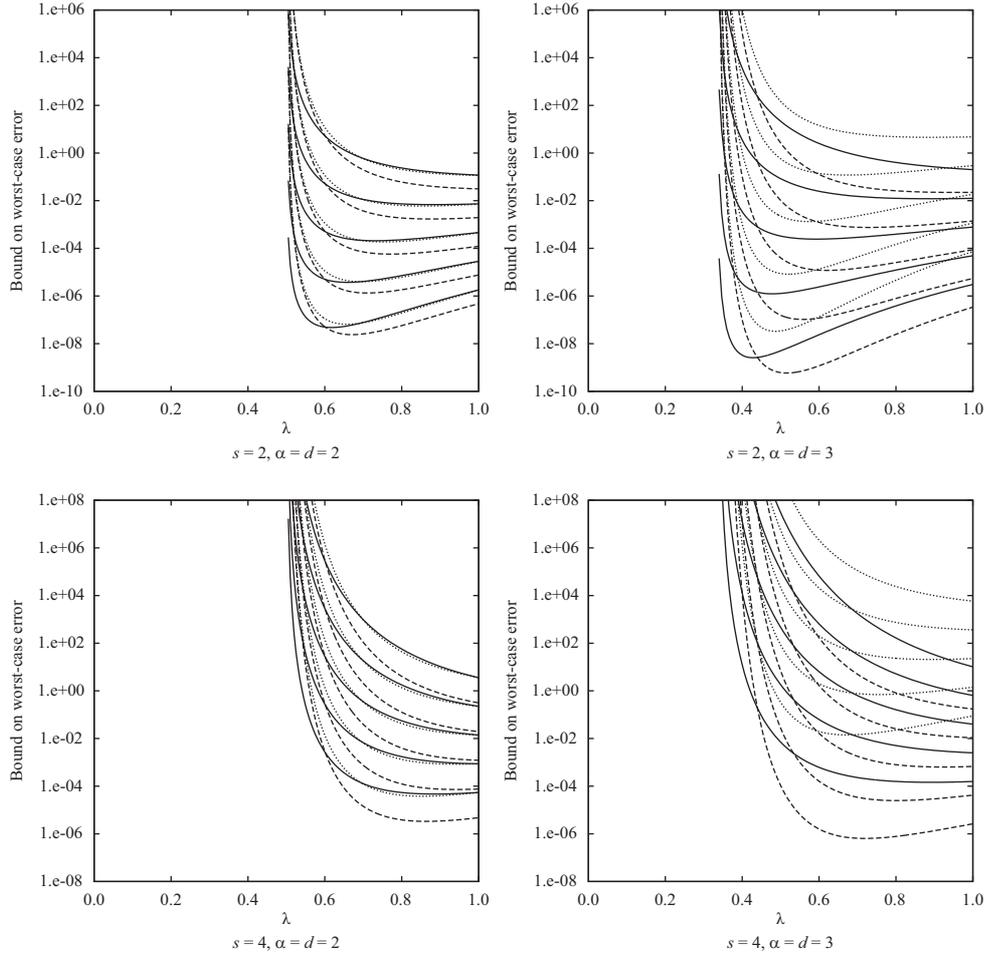}
\caption{Upper bounds on the worst-case error in $\Wcal_{s,\alpha,\bsgamma}$ of higher order polynomial lattice rules (lines) and upper bounds on $B_{\alpha,d,\bsgamma,(1)}(\bsq,p)$ and $B_{d,\bsgamma,(2)}(\bsq,p)$ of interlaced polynomial lattice rules (dots and dashed-lines, respectively) for $m=8,12,16,20,24$ with various choices of $s$ and $\alpha=d$.}
\label{fig:bound_on_error}
\end{center}
\end{figure}

In the following remark, we use the inequality, which states that for a sequence $(a_k)_{k\in \nat}$ of non-negative real numbers we have
  \begin{align}\label{eq:jensen}
    \left( \sum a_k\right)^{\lambda} \le \sum a^\lambda_k ,
  \end{align}
for any $\lambda$ with $0<\lambda\le 1$.

\begin{remark}\label{remark:cbc_propagation}
Let $s,m,\alpha,d\in \nat$ such that $1<\alpha\le d$, and $\bsgamma=(\gamma_v)_{v\subseteq \{1:s\}}$ be given. Suppose $\bsq=(q_1,\ldots,q_{ds})$ is found by Algorithm~\ref{algorithm:cbc}. From Theorem~\ref{theorem:cbc_bound1}, we know
  \begin{align*}
    B_{\alpha,d,\bsgamma,(1)}(\bsq,p) \le A_{\alpha,d,\bsgamma,\delta}b^{-\alpha m+\delta},
  \end{align*}
for any $\delta>0$. Let $\alpha'$ be an integer such that $\alpha\le \alpha'\le d$ and we consider $\gamma'_v=\gamma_v^{\alpha'/\alpha}$ for all $v\subseteq \{1:s\}$. We simply write $\bsgamma'=(\gamma'_v)_{v\subseteq \{1:s\}}$. Then we have
  \begin{align*}
    & B_{\alpha',d,\bsgamma',(1)}(\bsq,p) \\
    := & \sum_{\emptyset \ne u\subseteq \{1:ds\}}\gamma'_{w(u)}b^{\alpha'(2d-1)|w(u)|/2}\sum_{\substack{\bsk_u\in \nat^{|u|}\\ (\bsk_u,\bszero)\in D^\perp(\bsq,p)}}\tilde{r}_{\alpha',d,(1)}(\bsk_u) \\
    = & \sum_{\emptyset \ne u\subseteq \{1:ds\}}\left(\gamma_{w(u)}b^{\alpha(2d-1)|w(u)|/2}\right)^{\alpha'/\alpha}\sum_{\substack{\bsk_u\in \nat^{|u|}\\ (\bsk_u,\bszero)\in D^\perp(\bsq,p)}}(\tilde{r}_{\alpha,d,(1)}(\bsk_u))^{\alpha'/\alpha} \\
    \le & \left( \sum_{\emptyset \ne u\subseteq \{1:ds\}}\gamma_{w(u)}b^{\alpha(2d-1)|w(u)|/2}\sum_{\substack{\bsk_u\in \nat^{|u|}\\ (\bsk_u,\bszero)\in D^\perp(\bsq,p)}}\tilde{r}_{\alpha,d,(1)}(\bsk_u)\right) ^{\alpha'/\alpha} \\
    = & (B_{\alpha,d,\bsgamma,(1)}(\bsq,p))^{\alpha'/\alpha} \le A_{\alpha,d,\bsgamma,\delta}^{\alpha'/\alpha}b^{-\alpha' m+\delta \alpha'/\alpha} ,
  \end{align*}
for any $\delta>0$, where we have used (\ref{eq:jensen}) in the above inequality. This implies that interlaced polynomial lattice rules constructed by Algorithm~\ref{algorithm:cbc} for functions of smoothness $\alpha$ still achieve the optimal rate of convergence of the worst-case error for functions of smoothness $\alpha'$, as long as $\alpha\le \alpha'\le d$ holds.
\end{remark}

\begin{remark}\label{remark:fast_cbc}
In a similar way to \cite[Subsection~4,2]{Godxx} and \cite[Subsection~4,2]{GDxx}, it is possible to apply the fast CBC construction using the fast Fourier transform as in \cite{NC06a,NC06b} to our current setting. When $\gamma_u=\prod_{j\in u}\gamma_j$ for all $u\subseteq \{1:s\}$, for example, interlaced polynomial lattice rules of order $d$ can be constructed in $\Ocal(dsmb^m)$ operations using $\Ocal(b^m)$ memory. As mentioned in Subsection~\ref{ssec:ho_digital_nets}, the fast CBC construction of higher order polynomial lattice rules requires $\Ocal(dsmb^{dm})$ operations using $\Ocal(b^{dm})$ memory when $m'=dm$ in Definition~\ref{def:ho_polynomial_lattice}. This significant reduction in the construction cost enhances the practical usefulness of interlaced polynomial lattice rules.
\end{remark}

\subsection{Dependence of the error bounds on the dimension}\label{ssec:tractability_cbc}
We discuss the dependence of the worst-case error bounds on the dimension for the CBC construction. From Theorem~\ref{theorem:cbc_bound1}, we have the following corollary. It is straightforward to show a similar corollary for Theorem~\ref{theorem:cbc_bound2}.

\begin{corollary}\label{cor:tractability_cbc}
Let $s,m,\alpha,d\in \nat$, $\min(\alpha,d)>1$, and $\bsgamma=(\gamma_v)_{v\subseteq \{1:s\}}$ be given. Suppose $\bsq=(q_1,\ldots,q_{ds})$ is found by Algorithm~\ref{algorithm:cbc}. We define
  \begin{align*}
    A_{\lambda,q}:=\limsup_{s\to \infty}\left[\frac{1}{s^q}\sum_{\emptyset \ne v\subseteq \{1:s\}}\tilde{\gamma}_{v}^{\lambda}G_{\alpha,d,\lambda,d,(1)}^{|v|}\right] .
  \end{align*}
\begin{enumerate}
\item Assume $A_{\lambda,0}<\infty$ for some $1/\min(\alpha,d)<\lambda\le 1$. Then the worst-case error is bounded independently of the dimension.

\item Assume $A_{\lambda,q}<\infty$ for some $1/\min(\alpha,d)<\lambda\le 1$ and $q>0$. Then the worst-case error satisfies a bound which depends only polynomially on the dimension.
\end{enumerate}
\end{corollary}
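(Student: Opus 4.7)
The plan is to specialize Theorem~\ref{theorem:cbc_bound1} to $\tau = ds$, for which $j_0 = s$ and $d_0 = d$, and then observe that the two sums in the bracketed expression collapse into one. Since $d_0 = d$ forces $G_{\alpha,d,\lambda,d_0,(1)} = G_{\alpha,d,\lambda,d,(1)}$, I would reindex the second sum via $u := v \cup \{s\}$: the extra factor $G_{\alpha,d,\lambda,d,(1)}$ outside is absorbed into $G_{\alpha,d,\lambda,d,(1)}^{|v|}$ to give $G_{\alpha,d,\lambda,d,(1)}^{|u|}$ since $|u|=|v|+1$, while the range $v\subseteq\{1:s-1\}$ corresponds exactly to the subsets $u\subseteq\{1:s\}$ containing $s$. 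Combining this with the first sum, which enumerates all nonempty $v\subseteq\{1:s-1\}$, yields
\begin{equation*}
B_{\alpha,d,\bsgamma,(1)}(\bsq,p) \le \frac{1}{(b^m-1)^{1/\lambda}}\left[\sum_{\emptyset\ne v\subseteq\{1:s\}}\tilde{\gamma}_v^{\lambda}G_{\alpha,d,\lambda,d,(1)}^{|v|}\right]^{1/\lambda}.
\end{equation*}

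Because $B_{\alpha,d,\bsgamma,(1)}(\bsq,p)$ dominates $e(\Dcal_d(P_{b^m,ds}(\bsq,p)),\Wcal_{s,\alpha,\bsgamma})$ by Theorem~\ref{theorem:bound1}, both parts of the corollary then reduce to controlling the bracketed sum as a function of $s$. For part~(1), the assumption $A_{\lambda,0}<\infty$ for some admissible $\lambda$ directly says that the bracketed sum is uniformly bounded in $s$; hence the worst-case error is $O((b^m-1)^{-1/\lambda})$, with the implicit constant independent of $s$. For part~(2), $A_{\lambda,q}<\infty$ forces the bracketed sum to be at most a constant multiple of $s^q$ for all sufficiently large $s$, so after raising to the power $1/\lambda$ the worst-case error is bounded by a polynomial in $s$ of degree $q/\lambda$ times $(b^m-1)^{-1/\lambda}$.

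The argument is essentially bookkeeping; there is no genuine analytic obstacle, because the heavy lifting has been done in Theorems~\ref{theorem:bound1} and~\ref{theorem:cbc_bound1}. The only step that requires mild care is the index-shifting identity $v\mapsto v\cup\{s\}$, since this is where the loose $G_{\alpha,d,\lambda,d_0,(1)}$ factor appearing in front of the second sum of Theorem~\ref{theorem:cbc_bound1} must be reabsorbed into the exponent $|v|$ of $G_{\alpha,d,\lambda,d,(1)}$. Once this identification is made explicit, the two claims follow immediately from the definition of $A_{\lambda,q}$ and the fact that $(b^m-1)^{-1/\lambda}$ is independent of $s$.
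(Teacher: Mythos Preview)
Your proposal is correct and follows essentially the same approach as the paper's own proof: specialize Theorem~\ref{theorem:cbc_bound1} to $\tau=ds$, collapse the two sums into $\sum_{\emptyset\ne v\subseteq\{1:s\}}\tilde{\gamma}_v^{\lambda}G_{\alpha,d,\lambda,d,(1)}^{|v|}$, invoke Theorem~\ref{theorem:bound1} to pass from $B_{\alpha,d,\bsgamma,(1)}$ to the worst-case error, and then read off the dimension dependence from $A_{\lambda,q}$. In fact you spell out the index-shift $u=v\cup\{s\}$ that the paper leaves implicit, and your treatment of the $\limsup$ (``a constant multiple of $s^q$ for all sufficiently large $s$'') is slightly more careful than the paper's direct substitution of $A_{\lambda,q}s^q$.
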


\begin{proof}
Assume $A_{\lambda,q}<\infty$ for some $1/\min(\alpha,d)<\lambda\le 1$ and $q\ge 0$. From this assumption and Theorem~\ref{theorem:cbc_bound1} together with the fact that $B_{\alpha,d,\bsgamma,(1)}(\bsq,p)$ is a bound on $e(\Dcal_d(P_{b^m,ds}(\bsq,p)),\Wcal_{s,\alpha,\bsgamma})$, we have
  \begin{align*}
    e(\Dcal_d(P_{b^m,ds}(\bsq,p)),\Wcal_{s,\alpha,\bsgamma}) & \le \frac{1}{(b^m-1)^{1/\lambda}}\left[ \sum_{\emptyset \ne v \subseteq \{1:s\}}\tilde{\gamma}_v^{\lambda}G_{\alpha,d,\lambda,d,(1)}^{|v|}\right]^{1/\lambda} \\
    & \le \frac{(A_{\lambda, q}s^q)^{1/\lambda}}{(b^m-1)^{1/\lambda}} .
  \end{align*}
Thus, the worst-case error is bounded independently of the dimension when $q=0$, and satisfies a bound which depends only polynomially on the dimension when $q>0$.
\end{proof}

\section{Korobov construction}\label{sec:korobov}
As another means of finding a good set of polynomials $\bsq$, we investigate the Korobov construction. We show that interlaced polynomial lattice rules thus constructed achieve the optimal rate of convergence of the worst-case error and discuss the dependence of the worst-case error on the dimension for the Korobov construction.

\subsection{Construction algorithm and convergence rate}
In the Korobov construction, we set $q_1=1$ without loss of generality and restrict $q_\tau$ for $2\le \tau\le ds$ such that $q_j=q^{j-1} \pmod p$ for a common $q\in \Rcal_m$. That is, we only consider a generating vector of the form
  \begin{align*}
     (q_1,q_2,\ldots, q_{ds})=\psi_{ds}(q):=(1,q,\ldots, q^{ds-1}) \pmod p,
  \end{align*}
for $q\in \Rcal_{m}$. The idea of the Korobov construction is to search for a polynomial $q\in \Rcal_{m}$ which minimizes $B_{\alpha,d, \bsgamma,(1)}(\psi_{ds}(q),p)$ or $B_{d, \bsgamma,(2)}(\psi_{ds}(q),p)$. Thus the Korobov construction employing $B_{\alpha,d, \bsgamma,(1)}(\bsq,p)$ as a quality criterion can be summarized as follows.

\begin{algorithm}\label{algorithm:korobov}
For $s,m,\alpha,d\in \nat$, $\min(\alpha,d)>1$, and $\bsgamma=(\gamma_v)_{v\subseteq \{1:s\}}$, do the following:
	\begin{enumerate}
		\item Choose an irreducible polynomial $p\in \FF_b[x]$ such that $\deg(p)=m$.
		\item Find $q$ which minimizes $B_{\alpha,d, \bsgamma,(1)}(\psi_{ds}(\tilde{q}),p)$ as a function of $\tilde{q}\in \Rcal_m$.
	\end{enumerate}
\end{algorithm}
When we employ $B_{d,\bsgamma,(2)}(\bsq,p)$ as a quality criterion, we need to add one more condition $d\le \alpha$ and replace $B_{\alpha,d, \bsgamma,(1)}(\psi_{ds}(\tilde{q}),p)$ with $B_{d, \bsgamma,(2)}(\psi_{ds}(\tilde{q}),p)$ in Step 2.

We have the following theorems, which states that interlaced polynomial lattice rules thus constructed achieve the optimal rate of convergence of the worst-case error.

\begin{theorem}\label{theorem:korobov_bound1}
Let $s,m,\alpha,d\in \nat$, $\min(\alpha,d)>1$, and $\bsgamma=(\gamma_v)_{v\subseteq \{1:s\}}$ be given. Suppose that $\bsq=\psi_{ds}(q)$ is found by Algorithm~\ref{algorithm:korobov}. Then we have
  \begin{align*}
    B_{\alpha,d,\bsgamma,(1)}(\psi_{ds}(q),p) \le \frac{(ds)^{1/\lambda}}{(b^m-1)^{1/\lambda}}\left[ \sum_{\emptyset \ne v \subseteq \{1:s\}}\tilde{\gamma}_v^{\lambda}H_{\alpha,d,\lambda,(1)}^{|v|} \right]^{1/\lambda} ,
  \end{align*}
for $1/\min(\alpha,d) < \lambda \le 1$, where we write 
  \begin{align*}
   H_{\alpha,d,\lambda,(1)} = -1+(1+\tilde{H}_{\alpha,d,\lambda,(1)})^d ,
  \end{align*}
in which we define
  \begin{align*}
   \tilde{H}_{\alpha,d,\lambda,(1)} = \frac{1}{b^{\alpha \lambda/2}}\cdot\frac{b-1}{b^{\lambda\min(\alpha,d)}-b} ,
  \end{align*}
\end{theorem}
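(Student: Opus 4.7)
The plan is a standard averaging argument over the Korobov parameter. Since Algorithm~\ref{algorithm:korobov} selects $q\in\Rcal_m$ that minimises $B_{\alpha,d,\bsgamma,(1)}(\psi_{ds}(q),p)$, the minimum of any monotone transform of this quantity is bounded by its average over $\Rcal_m$. So for $\lambda \in (1/\min(\alpha,d),1]$ I would estimate
\begin{align*}
\min_{q\in\Rcal_m}B_{\alpha,d,\bsgamma,(1)}(\psi_{ds}(q),p)^{\lambda}\ \le\ \frac{1}{b^m-1}\sum_{q\in\Rcal_m} B_{\alpha,d,\bsgamma,(1)}(\psi_{ds}(q),p)^\lambda
\end{align*}
and raise the resulting bound to the $1/\lambda$-th power at the end.

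First I would apply Jensen's inequality~\eqref{eq:jensen} with exponent $\lambda$ inside the criterion~\eqref{eq:criterion}, yielding
\begin{align*}
B_{\alpha,d,\bsgamma,(1)}(\psi_{ds}(q),p)^\lambda \le \sum_{\emptyset\ne u\subseteq\{1:ds\}}\tilde\gamma_{w(u)}^\lambda \sum_{\substack{\bsk_u\in\nat^{|u|}\\ (\bsk_u,\bszero)\in D^\perp_{\psi_{ds}(q),p}}}\tilde r_{\alpha,d,(1)}(\bsk_u)^\lambda.
\end{align*}
Swapping the $q$-sum inwards, the crucial quantity is $N(\bsk_u,u):=\#\{q\in\Rcal_m:(\bsk_u,\bszero)\in D^\perp_{\psi_{ds}(q),p}\}$. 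By the Korobov structure this is the number of $q\in\Rcal_m$ satisfying the single relation $\sum_{j\in u}\rtr_m(k_j)\,q^{j-1}\equiv 0\pmod p$. Because $p$ is irreducible (step~1 of Algorithm~\ref{algorithm:korobov}), $\FF_b[x]/(p)$ is a field of cardinality $b^m$, and any nonzero $\rtr_m(k_j)$ is invertible in it since $\deg\rtr_m(k_j)<m$. Consequently, if at least one $\rtr_m(k_j)\ne 0$, the above is a nonzero polynomial identity in $q$ of degree at most $ds-1$, giving $N(\bsk_u,u)\le ds-1$; in the degenerate case where $\rtr_m(k_j)=0$ for every $j\in u$ the condition is vacuous and $N(\bsk_u,u)=b^m-1$.

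To absorb the degenerate case, I would observe that $\rtr_m(k)=0$ with $k\in\nat$ forces $k=jb^m$ for some $j\in\nat$, whence $\mu_1(k)=\mu_1(j)+m$ and $\tilde r_{\alpha,d,(1)}(k)^\lambda = b^{-\lambda\min(\alpha,d)m}\,\tilde r_{\alpha,d,(1)}(j)^\lambda$. Combined with the closed-form evaluation $\sum_{k\ge 1}\tilde r_{\alpha,d,(1)}(k)^\lambda = \tilde H_{\alpha,d,\lambda,(1)}$, obtained by the same geometric-series calculation as in the proof of Corollary~\ref{corollary:criterion}, the degenerate contribution factorises as $b^{-\lambda\min(\alpha,d)m|u|}\,\tilde H_{\alpha,d,\lambda,(1)}^{|u|}$. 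Since $\lambda\min(\alpha,d)>1$, this prefactor is bounded by $b^{-m}\le 1/(b^m-1)$ for every $|u|\ge 1$, so summing the two cases gives $\frac{ds}{b^m-1}\tilde H_{\alpha,d,\lambda,(1)}^{|u|}$ for the inner double sum on the $u$-th summand. Finally I would regroup the outer sum by $v=w(u)$: for each $j\in v$ the choice $u\cap\{(j-1)d+1:jd\}$ ranges over non-empty subsets of a $d$-set, so $\sum_{u:w(u)=v}\tilde H_{\alpha,d,\lambda,(1)}^{|u|} = \prod_{j\in v}\bigl((1+\tilde H_{\alpha,d,\lambda,(1)})^d-1\bigr) = H_{\alpha,d,\lambda,(1)}^{|v|}$, which is precisely the factor appearing in the claimed bound.

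The main obstacle will be the degenerate case. The root-count $ds-1$ relies on irreducibility of $p$ to ensure $\FF_b[x]/(p)$ is a field, and absorbing the degenerate tail into the $ds/(b^m-1)$ prefactor relies crucially on $\lambda\min(\alpha,d)>1$. In the CBC proof (Theorem~\ref{theorem:cbc_bound1}) only the latter role of $\lambda$ is required, so in comparison the Korobov proof uses the hypothesis more substantially and the argument turns on the rapid decay of $\tilde r_{\alpha,d,(1)}(k)^\lambda$ for $k \in b^m\nat$.
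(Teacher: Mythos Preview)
Your proposal is correct and follows essentially the same route as the paper's proof: average $B_{\alpha,d,\bsgamma,(1)}^{\lambda}$ over $\Rcal_m$, apply inequality~\eqref{eq:jensen}, split the root count according to whether all $\rtr_m(k_j)$ vanish (yielding $b^m-1$) or not (yielding at most $ds-1$ by the degree bound over the field $\FF_b[x]/(p)$), use the geometric-series evaluation of $\sum_k \tilde r_{\alpha,d,(1)}(k)^\lambda$ (Lemma~\ref{lemma:sum_weight} in the paper), and regroup by $v=w(u)$. The paper writes the non-degenerate contribution as the full sum minus the degenerate tail before bounding, whereas you bound it directly by the full sum; the end result and the use of $\lambda\min(\alpha,d)>1$ to absorb the degenerate term into $1/(b^m-1)$ are identical.
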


\begin{theorem}\label{theorem:korobov_bound2}
Let $s,m,\alpha,d\in \nat$, $1<d\le \alpha$, and $\bsgamma=(\gamma_v)_{v\subseteq \{1:s\}}$ be given. Suppose that $\bsq=\psi_{ds}(q)$ is found by Algorithm~\ref{algorithm:korobov}, in which $B_{\alpha,d,\bsgamma,(1)}(\psi_{ds}(\tilde{q}),p)$ is replaced with $B_{d,\bsgamma,(2)}(\psi_{ds}(\tilde{q}),p)$. Then we have
  \begin{align*}
    B_{d,\bsgamma,(2)}(\psi_{ds}(q),p) \le \frac{(ds)^{1/\lambda}}{(b^m-1)^{1/\lambda}}\left[ \sum_{\emptyset \ne v \subseteq \{1:s\}}\tilde{\gamma}_v^{\lambda}H_{d,\lambda,(2)}^{|v|} \right]^{1/\lambda} ,
  \end{align*}
for $1/\min(\alpha,d) < \lambda \le 1$, where we write 
  \begin{align*}
   H_{d,\lambda,(2)} = -1+\prod_{l=1}^d\left[ 1+b^{\lambda(d-l)}\tilde{H}_{d,\lambda,(2)}\right] ,
  \end{align*}
in which we define
  \begin{align*}
   \tilde{H}_{d,\lambda,(2)} = \frac{b-1}{b^{\lambda\min(\alpha,d)}-b} .
  \end{align*}
\end{theorem}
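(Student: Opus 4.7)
The plan is a standard averaging argument over $\Rcal_m$. Since the algorithm selects $q$ to minimize $B_{d,\bsgamma,(2)}(\psi_{ds}(\cdot),p)$, the minimum of $B_{d,\bsgamma,(2)}^{\lambda}$ over $\Rcal_m$ is bounded above by its arithmetic mean. Applying Jensen's inequality (\ref{eq:jensen}) to push the exponent $\lambda$ inside the double sum defining $B_{d,\bsgamma,(2)}$ in (\ref{eq:criterion2}) and then interchanging the summation over $\tilde{q}\in \Rcal_m$ with those over $u$ and $\bsk_u$ reduces the problem to bounding, for each $\bsk_u\in\nat^{|u|}$, the count
\[
N(\bsk_u) := \#\{\tilde{q}\in\Rcal_m \,:\, (\bsk_u,\bszero)\in D^\perp_{\psi_{ds}(\tilde{q}),p}\}.
\]

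For a Korobov generator, the dual lattice condition reads $\sum_{j\in u}\rtr_m(k_j)\tilde{q}^{\,j-1}\equiv 0\pmod p$. Since $p$ is irreducible, $\FF_b[x]/p$ is a field, so the left-hand side is a polynomial in $\tilde{q}$ of degree at most $ds-1$ over this field and has at most $ds-1$ roots unless it vanishes identically; the identically vanishing case requires $\rtr_m(k_j)=0$ for every $j\in u$, i.e., each $k_j\in b^m\nat$. Such \emph{degenerate} $\bsk_u$ carry $\mu_1(k_j)\ge m+1$ and hence an extra decay factor $b^{-\lambda d m|u|}$; since $\lambda d>1$ by hypothesis, this factor is at most $1/(b^m-1)$ whenever $|u|\ge 1$, so the degenerate contribution merges into the main bound and only inflates the prefactor from $ds-1$ to $ds$.

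The remainder is a routine computation. Using $\#\{k\ge 1:\mu_1(k)=a\}=(b-1)b^{a-1}$ and a geometric series gives $\sum_{k\ge 1}b^{-\lambda d\mu_1(k)}=(b-1)/(b^{\lambda d}-b)=\tilde{H}_{d,\lambda,(2)}$, whence $\sum_{k\ge 1}\tilde{r}_{d,j,(2)}(k)^{\lambda}=b^{\lambda(d-l)}\tilde{H}_{d,\lambda,(2)}$ for $j=(\lceil j/d\rceil-1)d+l$ with $l\in\{1,\ldots,d\}$. Summing over $u$ with $w(u)=v$ prescribed uses the identity $\sum_{\emptyset\ne S\subseteq\{1:d\}}\prod_{l\in S}x_l = -1+\prod_{l=1}^d(1+x_l)$ to collapse each factor per $j'\in v$ into $H_{d,\lambda,(2)}$, producing $H_{d,\lambda,(2)}^{|v|}$; finally summing over $v$ yields the bracketed quantity in the stated bound. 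Taking $\lambda$-th roots returns $B_{d,\bsgamma,(2)}(\psi_{ds}(q),p)\le \bigl((ds)/(b^m-1)\bigr)^{1/\lambda}\bigl[\sum_v\gamma_v^{\lambda}H_{d,\lambda,(2)}^{|v|}\bigr]^{1/\lambda}$, which is the stated estimate.

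The main obstacle is the delicate treatment of the degenerate $\bsk_u$: taken naively, their count is $N=b^m-1$, which would ruin the bound; the only way out is to exploit the $b^{-\lambda d m|u|}$ decay in their $\tilde{r}$-weight. That this decay precisely fits into the $(b^m-1)^{-1/\lambda}$ denominator relies on the range $\lambda>1/\min(\alpha,d)=1/d$, which is also exactly the convergence threshold for $\tilde{H}_{d,\lambda,(2)}$. This coincidence is what makes the Korobov construction succeed with a clean $(ds)^{1/\lambda}$ factor, in parallel with the analogous proofs of \cite[Theorem~2]{Godxx} and \cite[Theorem~1]{GDxx} that are cited for Theorems~\ref{theorem:cbc_bound1}--\ref{theorem:cbc_bound2}.
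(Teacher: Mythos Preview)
Your proof is correct and follows exactly the template the paper gives for Theorem~\ref{theorem:korobov_bound1}: the averaging/Jensen step, the root-counting dichotomy for the Korobov congruence over the field $\FF_b[x]/(p)$, the absorption of the degenerate contribution via $b^{-\lambda d m|u|}\le 1/(b^m-1)$ (using $\lambda d>1$), and the collapse to $H_{d,\lambda,(2)}^{|v|}$ through the product identity are all the same in spirit and detail. Your final bound carries $\gamma_v^{\lambda}$ rather than the $\tilde\gamma_v^{\lambda}$ printed in the statement; since $B_{d,\bsgamma,(2)}$ is defined with $\gamma_{w(u)}$ (cf.\ (\ref{eq:criterion2}) and Theorem~\ref{theorem:cbc_bound2}) your version is the natural one, and in any case it implies the stated inequality because $\tilde\gamma_v\ge\gamma_v$.
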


The proof of these theorems follows along similar lines as the proof of \cite[Theorem~4.7]{DKPS05}, while additional treatment is required for interlacing components. In the following, we only focus on Theorem~\ref{theorem:korobov_bound1} and give its proof. In the proof, we shall use inequality (\ref{eq:jensen}) and the following lemma.

\begin{lemma}\label{lemma:sum_weight}
For any $\lambda$ with $1/\min(\alpha,d)<\lambda\le 1$ and any $m\in \nat_0$, we have
  \begin{align*}
     \sum_{k=1}^{\infty}\tilde{r}_{\alpha,d,(1)}^{\lambda}(b^m k)=\frac{\tilde{H}_{\alpha,d,\lambda,(1)}}{b^{\lambda \min(\alpha,d)m}} ,
  \end{align*}
where $\tilde{H}_{\alpha,d,\lambda,(1)}$ is defined as in Theorem~\ref{theorem:korobov_bound1}.
\end{lemma}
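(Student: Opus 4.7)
The plan is to exploit the fact that $\mu_1(k)$ equals the number of $b$-adic digits of $k$, i.e., $\mu_1(k)=a$ whenever $b^{a-1}\le k<b^a$. This gives the key shift property $\mu_1(b^m k)=m+\mu_1(k)$ for $k\ge 1$, since multiplication by $b^m$ simply appends $m$ zero digits on the right without affecting the position of the most significant nonzero digit. Substituting the definition of $\tilde{r}_{\alpha,d,(1)}$ (with $k=0$ never occurring in the sum) yields
\begin{align*}
\sum_{k=1}^{\infty}\tilde{r}_{\alpha,d,(1)}^{\lambda}(b^m k)
= b^{-\lambda\alpha/2}\, b^{-\lambda\min(\alpha,d)m}\sum_{k=1}^{\infty} b^{-\lambda\min(\alpha,d)\mu_1(k)} .
\end{align*}

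Next I would evaluate the remaining sum by grouping values of $k$ according to $\mu_1(k)$. Since there are exactly $b^a-b^{a-1}=(b-1)b^{a-1}$ integers $k$ with $\mu_1(k)=a$, I obtain
\begin{align*}
\sum_{k=1}^{\infty} b^{-\lambda\min(\alpha,d)\mu_1(k)}
= (b-1)\sum_{a=1}^{\infty} b^{a-1}\, b^{-\lambda\min(\alpha,d)a}
= \frac{b-1}{b}\sum_{a=1}^{\infty}\bigl(b^{1-\lambda\min(\alpha,d)}\bigr)^{a} .
\end{align*}
The hypothesis $\lambda>1/\min(\alpha,d)$, i.e.\ $\lambda\min(\alpha,d)>1$, is precisely what guarantees convergence of this geometric series. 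Summing it gives
\begin{align*}
\frac{b-1}{b}\cdot\frac{b^{1-\lambda\min(\alpha,d)}}{1-b^{1-\lambda\min(\alpha,d)}}
=\frac{b-1}{b^{\lambda\min(\alpha,d)}-b}.
\end{align*}

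Combining the two factors then yields
\begin{align*}
\sum_{k=1}^{\infty}\tilde{r}_{\alpha,d,(1)}^{\lambda}(b^m k)
= b^{-\lambda\min(\alpha,d)m}\cdot\frac{1}{b^{\lambda\alpha/2}}\cdot\frac{b-1}{b^{\lambda\min(\alpha,d)}-b}
= \frac{\tilde{H}_{\alpha,d,\lambda,(1)}}{b^{\lambda\min(\alpha,d)m}},
\end{align*}
as claimed. There is no real obstacle here: the entire argument reduces to the observation $\mu_1(b^m k)=m+\mu_1(k)$ together with a counting of integers by digit-length and a geometric series summation. The only delicate point is checking that the geometric series converges, which is exactly the content of the condition $1/\min(\alpha,d)<\lambda\le 1$ in the statement.
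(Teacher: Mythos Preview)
Your proof is correct and follows essentially the same approach as the paper: the paper records the shift property as $\tilde{r}_{\alpha,d,(1)}(b^m k)=b^{-\min(\alpha,d)m}\tilde{r}_{\alpha,d,(1)}(k)$, which is equivalent to your $\mu_1(b^m k)=m+\mu_1(k)$, and then groups by digit length $\xi$ and sums the resulting geometric series exactly as you do.
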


\begin{proof}
From the definition of $\tilde{r}_{\alpha,d,(1)}(k)$, we have 
  \begin{align*}
     \tilde{r}_{\alpha,d,(1)}(b^m k) = b^{-\min(\alpha,d)m} \tilde{r}_{\alpha,d,(1)}(k) .
  \end{align*}
Thus we obtain
  \begin{align*}
     \sum_{k=1}^{\infty}\tilde{r}_{\alpha,d,(1)}^{\lambda}(b^m k) & = \frac{1}{b^{\lambda\min(\alpha,d)m}}\sum_{k=1}^{\infty}\tilde{r}_{\alpha,d,(1)}^{\lambda}(k) \\
     & = \frac{1}{b^{\lambda\min(\alpha,d)m}}\sum_{\xi=1}^{\infty}\sum_{k=b^{\xi-1}}^{b^\xi-1}b^{-\lambda\min(\alpha,d)\xi-\alpha\lambda/2} \\
     & = \frac{1}{b^{\lambda\min(\alpha,d)m+\alpha \lambda /2}}\sum_{\xi=1}^{\infty}(b^\xi-b^{\xi-1})b^{-\lambda\min(\alpha,d)\xi} \\
     & = \frac{1}{b^{\lambda\min(\alpha,d)m+\alpha \lambda /2}}\cdot \frac{b-1}{b^{\lambda\min(\alpha,d)}-b} .
  \end{align*}
Hence the result follows.
\end{proof}

\begin{proof}[Proof of Theorem~\ref{theorem:korobov_bound1}]
Since $B_{\alpha,d, \bsgamma,(1)}(\psi_{ds}(q),p)\le B_{\alpha,d, \bsgamma,(1)}(\psi_{ds}(\tilde{q}),p)$ for all $\tilde{q}\in \Rcal_m$, $B_{\alpha,d, \bsgamma,(1)}^\lambda(\psi_{ds}(q),p)$ has to be less than or equal to the average of $B_{\alpha,d, \bsgamma,(1)}^\lambda(\psi_{ds}(\tilde{q}),p)$ over $\tilde{q}\in \Rcal_m$ for $1/\min(\alpha,d)<\lambda\le 1$. Thus, we have
  \begin{align*}
    & B_{\alpha,d,\bsgamma,(1)}^\lambda(\psi_{ds}(q),p) \\
    \le & \frac{1}{b^m-1}\sum_{\tilde{q}\in \Rcal_m}B_{\alpha,d,\bsgamma,(1)}^\lambda(\psi_{ds}(\tilde{q}),p) \\
    \le & \frac{1}{b^m-1}\sum_{\tilde{q}\in \Rcal_m}\sum_{\emptyset \ne u\subseteq \{1:ds\}}\tilde{\gamma}_{w(u)}^\lambda \sum_{\substack{\bsk_u\in \nat^{|u|} \\ (\bsk_u,\bszero)\in D^\perp(\psi_{ds}(\tilde{q}),p)}}\tilde{r}_{\alpha,d,(1)}^\lambda(\bsk_u) \\
    = & \sum_{\emptyset \ne u\subseteq \{1:ds\}}\tilde{\gamma}_{w(u)}^\lambda\sum_{\bsk_u\in \nat^{|u|}}\tilde{r}_{\alpha,d,(1)}^\lambda(\bsk_u) \frac{1}{b^m-1}\sum_{\substack{\tilde{q}\in \Rcal_m\\ \rtr_m(\bsk_u)\cdot \psi_u(\tilde{q})\equiv 0\pmod p}}1 ,
  \end{align*}
for $1/\min(\alpha,d)<\lambda\le 1$, where we have used (\ref{eq:jensen}) in the second inequality and introduced the notation $\psi_u(\tilde{q}) = (\tilde{q}^{j-1})_{j\in u} \pmod p$.

We now follow along an argument as in the proof of \cite[Theorem~4.7]{DKPS05} to count the number of $\tilde{q}\in \Rcal_m$ satisfying $\rtr_m(\bsk_u)\cdot \psi_u(\tilde{q})\equiv 0\pmod p$ for a given $\bsk_u\in \nat^{|u|}$. First, we recall that for an irreducible polynomial $p\in \FF_b[x]$ with $\deg(p)=m$ and a non-zero $(\bar{k}_1,\ldots,\bar{k}_{ds})\in (\Rcal_m)^{ds}$, the congruence
  \begin{align*}
    \bar{k}_1+\bar{k}_2\tilde{q}+\cdots +\bar{k}_{ds}\tilde{q}^{ds-1}\equiv 0\pmod p
  \end{align*}
has at most $ds-1$ solutions $\tilde{q}\in \Rcal_m$.

For $\emptyset \ne u\subseteq \{1:ds\}$, we consider two cases:
\begin{enumerate}
\item For all $j\in u$, let $k_j=b^m l_j$ be such that $l_j\in \nat$. In this case, we have $\rtr_m(k_j)\equiv 0\pmod p$ for all $j\in u$. Thus we have
  \begin{align*}
    \sum_{\substack{\tilde{q}\in \Rcal_m\\ \rtr_m(\bsk_u)\cdot \psi_u(\tilde{q})\equiv 0\pmod p}}1=b^m-1.
  \end{align*}
\item Let $u^*$ be any non-empty subset of $u$. For all $j\in u\setminus u^*$, let $k_j=b^m l_j$ be such that $l_j\in \nat$. Further, for all $j\in u^*$, let $k_j=b^m l_j+l^*_j$ be such that $l_j\in \nat$ and $1\le l^*_j<b^m$. In this case, we have $\rtr_m(k_j)\equiv 0\pmod p$ for all $j\in u\setminus u^*$ and $\rtr_m(k_j)\not\equiv 0\pmod p$ for all $j\in u^*$. Thus we have
  \begin{align*}
    \sum_{\substack{\tilde{q}\in \Rcal_m\\ \rtr_m(\bsk_u)\cdot \psi_u(\tilde{q})\equiv 0\pmod p}}1\le ds-1.
  \end{align*}
\end{enumerate}

Now we obtain
  \begin{align*}
    & B_{\alpha,d,\bsgamma}^\lambda(\psi_{ds}(q),p) \\
    \le & \sum_{\emptyset \ne u\subseteq \{1:ds\}}\tilde{\gamma}_{w(u)}^\lambda\sum_{\bsl_u\in \nat^{|u|}}\tilde{r}_{\alpha,d,(1)}^\lambda(b^m\bsl_u) \\
    & + \frac{ds-1}{b^m-1}\sum_{\emptyset \ne u\subseteq \{1:ds\}}\tilde{\gamma}_{w(u)}^\lambda\left[\sum_{\bsk_u\in \nat^{|u|}}\tilde{r}_{\alpha,d,(1)}^\lambda(\bsk_u)-\sum_{\bsl_u\in \nat^{|u|}}\tilde{r}_{\alpha,d,(1)}^\lambda(b^m\bsl_u)\right] \\
    = & \sum_{\emptyset \ne u\subseteq \{1:ds\}}\tilde{\gamma}_{w(u)}^\lambda\left(\frac{\tilde{H}_{\alpha,d,\lambda,(1)}}{b^{\lambda \min(\alpha,d)m}}\right)^{|u|} \\
    & + \frac{ds-1}{b^m-1}\sum_{\emptyset \ne u\subseteq \{1:ds\}}\tilde{\gamma}_{w(u)}^\lambda\left[ \tilde{H}_{\alpha,d,\lambda,(1)}^{|u|}-\left( \frac{\tilde{H}_{\alpha,d,\lambda,(1)}}{b^{\lambda \min(\alpha,d)m}}\right)^{|u|}\right] \\
    \le & \frac{1}{b^m-1}\sum_{\emptyset \ne u\subseteq \{1:ds\}}\tilde{\gamma}_{w(u)}^\lambda \tilde{H}_{\alpha,d,\lambda,(1)}^{|u|}+\frac{ds-1}{b^m-1}\sum_{\emptyset \ne u\subseteq \{1:ds\}}\tilde{\gamma}_{w(u)}^\lambda \tilde{H}_{\alpha,d,\lambda,(1)}^{|u|} \\
    = & \frac{ds}{b^m-1}\sum_{\emptyset \ne u\subseteq \{1:ds\}}\tilde{\gamma}_{w(u)}^\lambda \tilde{H}_{\alpha,d,\lambda,(1)}^{|u|} \\
    = & \frac{ds}{b^m-1}\sum_{\emptyset \ne v\subseteq \{1:s\}}\tilde{\gamma}_{v}^\lambda \sum_{\substack{\emptyset \ne u\subseteq \{1:ds\}\\ w(u)=v}}\tilde{H}_{\alpha,d,\lambda,(1)}^{|u|} \\
    = & \frac{ds}{b^m-1}\sum_{\emptyset \ne v\subseteq \{1:s\}}\tilde{\gamma}_{v}^\lambda \left[ -1+(1+\tilde{H}_{\alpha,d,\lambda,(1)})^d\right]^{|v|} ,
  \end{align*}
where we have used Lemma~\ref{lemma:sum_weight} in the first equality. Hence the result follows.
\end{proof}

\begin{remark}\label{remark:korobov_propagation}
Let $s,m,\alpha,d\in \nat$, $\min(\alpha,d)>1$, and $\bsgamma=(\gamma_v)_{v\subseteq \{1:s\}}$ be given. Suppose $\bsq=(q_1,\ldots,q_{ds})$ is found by Algorithm~\ref{algorithm:korobov}. From Theorem \ref{theorem:korobov_bound1}, we know
  \begin{align*}
    B_{\alpha,d,\bsgamma,(1)}(\bsq,p) \le \tilde{A}_{\alpha,d,\bsgamma,\delta}b^{-\alpha m+\delta},
  \end{align*}
for any $\delta>0$. As in Remark~\ref{remark:cbc_propagation}, it can be concluded that interlaced polynomial lattice rules constructed by Algorithm~\ref{algorithm:korobov} for functions of smoothness $\alpha$ still achieve the optimal rate of convergence of the worst-case error for functions of smoothness $\alpha'$, as long as $\alpha\le \alpha'\le d$ holds.
\end{remark}

\begin{remark}\label{remark:korobov_comparison}
For the Korobov construction of higher order polynomial lattice rules, we need to search for $q$ not from $\Rcal_{m}$ but from $\Rcal_{m'}$ such that it minimizes the worst-case error, see \cite{BDGP11}. It implies that $q$ has the exponentially larger number of candidates when $m'=dm$ as compared to Algorithm~\ref{algorithm:korobov}. Therefore, interlaced polynomial lattice rules can significantly reduce the number of candidates while still achieving the optimal rate of convergence of the worst-case error as shown in Theorems~\ref{theorem:korobov_bound1} and~\ref{theorem:korobov_bound2}.
\end{remark}

\subsection{Dependence of the error bounds on the dimension}
As in Subsection~\ref{ssec:tractability_cbc}, here we discuss the dependence of the worst-case error bounds on the dimension for the Korobov construction. From Theorem~\ref{theorem:korobov_bound1}, we have the following corollary. It is straightforward to show a similar corollary for Theorem~\ref{theorem:korobov_bound2}. Since the proof is almost the same as that of Corollary~\ref{cor:tractability_cbc}, we omit it.

\begin{corollary}
Let $s,m,\alpha,d\in \nat$, $\min(\alpha,d)>1$, and $\bsgamma=(\gamma_v)_{v\subseteq \{1:s\}}$ be given. Suppose $\bsq=(q_1,\ldots,q_{ds})$ is found by Algorithm~\ref{algorithm:korobov}. We define
  \begin{align*}
    \tilde{A}_{\lambda,q}:=\limsup_{s\to \infty}\left[\frac{1}{s^q}\sum_{\emptyset \ne v\subseteq \{1:s\}}\tilde{\gamma}_{v}^{\lambda}H_{\alpha,d,\lambda,(1)}^{|v|}\right] .
  \end{align*}
Assume $\tilde{A}_{\lambda,q}<\infty$ for some $1/\min(\alpha,d)<\lambda\le 1$ and $q\ge 0$. Then the worst-case error satisfies a bound which depends only polynomially on the dimension.
\end{corollary}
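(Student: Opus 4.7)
The plan is to follow the same pattern as the proof of Corollary~\ref{cor:tractability_cbc}, but starting from the Korobov bound in Theorem~\ref{theorem:korobov_bound1} instead of the CBC bound in Theorem~\ref{theorem:cbc_bound1}. The key ingredients are (i) the fact that $B_{\alpha,d,\bsgamma,(1)}(\bsq,p)$ is an upper bound on $e(\Dcal_d(P_{b^m,ds}(\bsq,p)),\Wcal_{s,\alpha,\bsgamma})$ (Theorem~\ref{theorem:bound1}), and (ii) the hypothesis $\tilde{A}_{\lambda,q}<\infty$, which provides a bound on the sum $\sum_{\emptyset \ne v\subseteq \{1:s\}}\tilde{\gamma}_v^\lambda H_{\alpha,d,\lambda,(1)}^{|v|}$ of the form $C\, s^q$ for some constant $C$ and all sufficiently large $s$.

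First I would invoke Theorem~\ref{theorem:korobov_bound1} to write
\begin{align*}
   e(\Dcal_d(P_{b^m,ds}(\bsq,p)),\Wcal_{s,\alpha,\bsgamma}) \le B_{\alpha,d,\bsgamma,(1)}(\psi_{ds}(q),p) \le \frac{(ds)^{1/\lambda}}{(b^m-1)^{1/\lambda}} \left[\sum_{\emptyset \ne v \subseteq \{1:s\}}\tilde{\gamma}_v^{\lambda}H_{\alpha,d,\lambda,(1)}^{|v|}\right]^{1/\lambda}.
\end{align*}
Then I would use the assumption that $\tilde{A}_{\lambda,q}<\infty$, which (by the definition of $\limsup$) yields a constant $C>0$ such that the bracketed sum is bounded by $C\, s^q$ for all $s$ large enough. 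Substituting this in gives
\begin{align*}
   e(\Dcal_d(P_{b^m,ds}(\bsq,p)),\Wcal_{s,\alpha,\bsgamma}) \le \frac{(ds)^{1/\lambda}(C s^q)^{1/\lambda}}{(b^m-1)^{1/\lambda}} = \frac{d^{1/\lambda} C^{1/\lambda}}{(b^m-1)^{1/\lambda}}\, s^{(q+1)/\lambda},
\end{align*}
which is a polynomial bound in the dimension $s$ of degree $(q+1)/\lambda$.

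There is no real obstacle here; the entire argument is a direct substitution. The only small subtlety is that, unlike the CBC case in Corollary~\ref{cor:tractability_cbc}, one cannot obtain dimension-independent bounds even when $q=0$, because of the extra $(ds)^{1/\lambda}$ factor in the Korobov bound. This is why the statement of the corollary is formulated only in terms of polynomial dependence on $s$ and does not separate out a case $q=0$.
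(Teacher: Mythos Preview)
Your proposal is correct and follows exactly the approach the paper intends: the paper omits the proof, saying it is ``almost the same as that of Corollary~\ref{cor:tractability_cbc}'', and your argument is precisely that transplant, replacing the CBC bound by the Korobov bound from Theorem~\ref{theorem:korobov_bound1}. Your observation that the extra factor $(ds)^{1/\lambda}$ prevents a dimension-independent bound even when $q=0$ is also correct and explains the formulation of the corollary.
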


\section*{Acknowledgments}
This work was supported by Grant-in-Aid for JSPS Fellows No.24-4020. The author would like to thank Josef Dick and two anonymous referees for their valuable comments.

\end{document}